\pdfoutput=1 
\documentclass[final]{cmslatex}

\usepackage{graphicx}
\usepackage[update]{epstopdf}
\usepackage{url}
\usepackage{hyperref}
\hypersetup{colorlinks=true,linkcolor=blue,citecolor=blue,urlcolor=blue}
\usepackage{graphicx}
\usepackage{epstopdf}
\pdfoutput=1

\usepackage{bm}
\usepackage{amsmath}
\usepackage{amssymb}
\usepackage{graphicx}
\usepackage{subcaption}
\usepackage{color}
\usepackage{amsfonts}
\usepackage{amscd}
\usepackage{mathrsfs}
\usepackage{enumerate}
\usepackage{url}
\providecommand{\email}[1]{\url{#1}}
\usepackage{algorithm}
\usepackage{algpseudocode}
\usepackage{amsmath, amssymb, bm, mathtools}
\usepackage{empheq}
\usepackage{tikz}
\usetikzlibrary{arrows.meta, decorations.pathreplacing}

\newcommand{\R}{\mathbb{R}}

\def\d{\ensuremath{\mathrm{d}}}

\newcommand{\norm}[1]{\left\|#1\right\|}

\newcommand{\nn}{{\mathbf{n}}}

\newenvironment{equationa*}{\begin{equation*}\begin{aligned}} {\end{aligned}\end{equation*}}
\allowdisplaybreaks[2]

\newcommand{\Th}{\mathcal{T}_h}
\newcommand{\Kh}{\mathcal{K}_h}

\newcommand{\Sh}{S_h}

\newcommand{\diam}{\operatorname{diam}}

\newcommand{\wh}{\widehat}
\newcommand{\wt}{\widetilde}
\newcommand{\bs}{\boldsymbol{s}}

\newcommand{\bxi}{\boldsymbol{\xi}}
\newcommand{\balpha}{\boldsymbol{\alpha}}
\newcommand{\bbeta}{\boldsymbol{\beta}}
\newcommand{\bgamma}{\boldsymbol{\gamma}}

\begin{document}

\title{A High-Order Surface Finite Element Method Based on Intersections with Background Tetrahedral Meshes}

\author{
Zibo Zhao \thanks{Qiu Zhen College, Tsinghua University, Beijing, China \email{zhaozb21@mails.tsinghua.edu.cn}.}\and
Zuoqiang Shi \thanks{Yau Mathematical Sciences Center, Tsinghua University,
Beijing, China, 100084. \&
Yanqi Lake Beijing Institute of Mathematical Sciences and Applications,
 Beijing, China, 101408.
\email{zqshi@tsinghua.edu.cn}}}
\maketitle

\begin{abstract}
This paper develops a high-order surface finite element method for elliptic equations posed on a smooth closed surface implicitly defined as the zero level set of a function in three dimensions. In contrast to classical surface finite element methods that start from a prescribed triangulation of the surface, the proposed method constructs the discrete surface space from the intersections between the exact surface and an ambient tetrahedral mesh. More precisely, an active tetrahedral shell is generated around the implicit surface, and each cut tetrahedron contributes either a triangular or a quadrilateral surface patch according to its intersection pattern with the zero level set. The finite element space is first defined on the resulting piecewise planar cut surface and is then lifted to the exact surface by local level-set-based parameterizations.
The resulting method combines features of surface FEM and unfitted/trace methods. Like surface FEM, it produces a conforming finite element space on the exact surface after lifting; however, the geometry and finite element space are induced by intersections with background tetrahedra like unfitted/trace methods. We prove that the local lifting maps agree pointwise across common faces and assemble into a global homeomorphism.  We also derive explicit formulas for tangent vectors, Gram matrices, mass and stiffness integrals, and prove stability and high-order derivative estimates for the lifting maps.  The latter estimates are formulated in broken Sobolev norms and lead to a Céa-type energy-norm error bound for the proposed high-order lifted surface finite element method.

\end{abstract}

\begin{keywords}
surface finite element method, implicit surface, level set function, high-order finite elements, ruled surface parameterization, lifting map, error estimate
\end{keywords}

\section{Introduction}
Let $\Gamma\subset\R^3$ be a smooth closed connected surface represented as the zero level set
\[
        \Gamma=\{x\in U:F(x)=0\},
\]
where $U\subset\R^3$ is a fixed polygonal bulk domain and $F\in C^{m+2}(U)$ satisfies $\nabla F\ne0$ on a tubular neighborhood of $\Gamma$.  We consider the model elliptic surface equation
\begin{equation}
\label{eq:model-pde}
        -\Delta_\Gamma u+u=f\qquad \hbox{on }\Gamma,
\end{equation}
with weak formulation: find $u\in H^1(\Gamma)$ such that
\begin{equation}
\label{eq:weak}
        a(u,v):=\int_\Gamma \nabla_\Gamma u\cdot\nabla_\Gamma v\,\d S+\int_\Gamma uv\,\d S
        =\int_\Gamma fv\,\d S\qquad \forall v\in H^1(\Gamma).
\end{equation}
The goal is to construct a finite element method that uses polynomial spaces on a simple piecewise linear surface, but integrates and differentiates the lifted functions on the exact level-set surface.

The surface finite element method began with Dziuk's finite element method for the Beltrami operator on arbitrary surfaces \cite{Dziuk1988}.  In this classical approach a smooth surface is approximated by a shape-regular triangulated surface, the finite element space consists of continuous piecewise polynomials on the discrete surface, and the weak formulation uses the tangential gradient on the approximating surface. Dziuk and Elliott \cite{DziukElliott2013} gives a broad account of this viewpoint and of related methods for surface PDEs, including triangulated surface methods, implicit surface methods, unfitted finite element methods, and diffuse-interface techniques.  Surface finite element ideas have also been extended to time-dependent surfaces, for instance through the evolving surface finite element method \cite{DziukElliott2007} and through parabolic surface finite element schemes on stationary or moving surfaces \cite{DziukElliott2007Parabolic}.  Eulerian formulations for parabolic equations on implicit surfaces were studied in \cite{DziukElliott2008,DziukElliott2010}, and adaptive surface finite element techniques for implicitly defined surfaces were developed in \cite{DemlowDziuk2007}.

A central lesson from higher-order surface finite elements is that high-order PDE accuracy requires a correspondingly accurate approximation of the geometry.  Demlow \cite{Demlow2009} constructed higher-order analogues of Dziuk's method and proved error estimates in both pointwise and $L^2$-based norms for elliptic problems on surfaces.  His analysis separates the finite-dimensional approximation error from the geometric error, and shows that the degree of the surface approximation must be chosen consistently with the degree of the finite element space in order to obtain the expected convergence rate.  This principle is essential for the present paper.  We use polynomials on a piecewise linear surface for the degrees of freedom, but the Galerkin integrals and surface gradients are evaluated after an exact level-set lift.  In this way the local geometry used in assembly is not merely first-order.

A different family of methods starts from an implicit description of the surface.  The level-set framework for variational problems and PDEs on implicit surfaces was introduced by Bertalmio, Cheng, Osher, and Sapiro \cite{BertalmioChengOsherSapiro2001}; related implicit-surface or embedding methods include the work of Greer, Bertozzi, and Sapiro on fourth-order equations on general geometries \cite{GreerBertozziSapiro2006}, Burger's finite element method on implicit surfaces \cite{Burger2009}, and the closest-point embedding method of Ruuth and Merriman \cite{RuuthMerriman2008}.  Narrow-band and unfitted bulk finite element methods, including the $h$-narrow band method \cite{DeckelnickDziukElliottHeine2010} and unfitted bulk-mesh surface FEM \cite{DeckelnickElliottRanner2014}, are also designed to exploit an ambient mesh rather than a pre-existing fitted surface triangulation.

The trace finite element method is the unfitted approach most closely related to the present work.  Olshanskii, Reusken, and Grande \cite{OlshanskiiReuskenGrande2009} introduced a method in which finite element functions are defined on a background tetrahedral mesh and restricted to an approximation of the surface.  The review of Olshanskii and Reusken \cite{OlshanskiiReusken2018} presents TraceFEM as a general method for stationary and evolving surfaces: the surface may cut the background mesh arbitrarily, while the approximation space is the trace of a bulk finite element space.  This flexibility is especially useful for level-set surfaces and for evolving geometries.  However, if the algebraic system is represented by the background nodal basis, the restricted functions are generally a frame rather than a basis for the trace space.  Consequently the mass matrix may be singular and the stiffness matrix may be ill conditioned or strongly dependent on the cut position.  Matrix properties and conditioning of TraceFEM were studied in \cite{OlshanskiiReusken2010,Reusken2015}.  Stabilization techniques such as ghost penalties \cite{Burman2010Ghost}, stabilized cut finite element methods \cite{BurmanHansboLarson2015}, and full-gradient stabilizations \cite{BurmanHansboLarsonMassingZahedi2016} are therefore important components of practical trace and cut finite element methods.  Cut finite element techniques on embedded manifolds of higher codimension are discussed, for example, in \cite{BurmanHansboLarsonMassing2018}.

High-order TraceFEM combines the trace idea with high-order geometry recovery.  Grande and Reusken \cite{GrandeReusken2016} studied a higher-order finite element method for PDEs on level-set surfaces based on a parametric mapping of a piecewise planar surface.  Lehrenfeld \cite{Lehrenfeld2016} developed high-order unfitted finite element methods on level-set domains using isoparametric mappings of the background mesh.  Building on these ideas, Grande, Lehrenfeld, and Reusken \cite{GrandeLehrenfeldReusken2018} analyzed a high-order TraceFEM for PDEs on level-set surfaces, obtaining optimal-order $H^1$ error bounds and studying stabilizations capable of controlling the condition number for high-order trace discretizations.

The method proposed in this paper combines the surface FEM and TraceFEM viewpoints in a different way.  Like surface FEM, it ultimately works with a conforming finite element space on a surface triangulation and lifts this space to the exact surface.  Like TraceFEM, it uses an implicit level-set description and a tetrahedral background structure to recover local geometry.  The mesh generation step first produces a quasi-uniform, shape-regular surface triangulation, for example from a background grid together with a moving-node reconstruction.  A balanced tubular offset tetrahedral shell is then built around this triangulation.  The shell gives an admissible active tetrahedral mesh whose vertices lie on two nearby normal layers, so that four-edge cuts satisfy the normal balance condition needed in the high-order derivative estimates.  Each active tetrahedron gives an explicit local parameterization of the exact surface patch: a vertex-projection map in the three-edge case and a ruled two-rail map in the four-edge case.  Thus the finite element unknowns live on a genuine surface triangulation, avoiding the singular trace-space matrix issue, while the background tetrahedral geometry still supplies explicit level-set-based parameterizations and Jacobians for high-order assembly.

\subsection{Overview of the proposed method}
\label{sec:method-overview}

The proposed method consists of three main steps.

\emph{Step 1: construction of an auxiliary surface mesh.}
We first construct a quasi-uniform and shape-regular triangulation
\(\mathcal S_h\) of the implicit surface \(\Gamma\). Such a triangulation may,
for example, be obtained from an ambient tetrahedral mesh by the
mesh-adjustment and level-set intersection procedure of
Zhao \cite{Zhao2025Quadrature}. This procedure avoids degenerate
intersections and produces a face-compatible piecewise planar reconstruction
of the surface.

\emph{Step 2: construction of a tubular tetrahedral shell.}
Starting from \(\mathcal S_h\), we place two offset vertices
\[
    p_i^\pm=p_i\pm\rho h_\Gamma\nn(p_i)
\]
on the two sides of \(\Gamma\) for every surface vertex \(p_i\).
Each triangle of \(\mathcal S_h\) then generates a triangular prism, and all
such prisms are subdivided by a globally consistent rule into a tetrahedral
mesh \(\mathcal T_h^\Gamma\). The offset distance is comparable to the surface
mesh size, so the resulting tetrahedra remain uniformly shape regular and
satisfy the balanced normal condition required in the four-edge analysis.

The intersections of \(\Gamma\) with the tetrahedra of
\(\mathcal T_h^\Gamma\) generate a new cut surface \(\Gamma_h\). Each cut is
either triangular or quadrilateral; a quadrilateral cut is divided into two
triangles using face-compatible diagonal choices. We denote the resulting
shape-regular surface triangulation by \(\mathcal K_h\). Thus
\(\mathcal S_h\) is used only to construct the tubular shell, whereas the
finite element surface \(\Gamma_h\) is induced by the intersections of the
exact surface with the background tetrahedra.

\emph{Step 3: parameterized surface finite element computation.}
On \(\Gamma_h\), we define
\[
    S_h^r
    =
    \left\{
        \varphi_h\in C^0(\Gamma_h):
        \varphi_h|_K\in\mathbb P_r(K),
        \quad K\in\mathcal K_h
    \right\}.
\]
For every cut tetrahedron \(T\), a local level-set parameterization
\[
    \Phi_T:\Gamma_h\cap T\longrightarrow\Gamma\cap T
\]
maps the planar cut patch to the exact surface. We use a vertex-projection
parameterization for triangular cuts and a ruled two-rail parameterization
for quadrilateral cuts. In the latter case, the same parameterization is used
on both triangles of the planar representation. Consequently, the local maps
are compatible across element interfaces and assemble into a continuous
global map \(\Phi_h\).

The corresponding lifted finite element space is
\[
    S_{h,r}^l
    =
    \left\{
        \varphi_h^l:\Gamma\to\mathbb R:
        \varphi_h^l\circ\Phi_h=\varphi_h,
        \quad \varphi_h\in S_h^r
    \right\}
    \subset H^1(\Gamma).
\]
The discrete problem is therefore posed on the exact surface: find
\(u_h^l\in S_{h,r}^l\) such that
\[
    \int_\Gamma
        \nabla_\Gamma u_h^l\cdot\nabla_\Gamma v_h^l\,dS
    +
    \int_\Gamma u_h^l v_h^l\,dS
    =
    \int_\Gamma f v_h^l\,dS
    \qquad
    \forall v_h^l\in S_{h,r}^l .
\]
The element integrals are evaluated by pulling them back through the local
parameterizations \(\Phi_T\). Hence the finite element connectivity is
provided by the cut surface \(\Gamma_h\), while the metric, surface Jacobian,
and tangential derivatives are computed from the exact level-set geometry.

The following sections describe the tubular shell, the local
parameterizations, and the error analysis in detail. We assume that the
surface integrals are evaluated exactly or by sufficiently high-order
quadrature; the corresponding quadrature errors can be treated by standard
Strang-lemma arguments.

\section{Geometry, adjusted meshes, and cut configurations}
\label{sec:geometry}
Let $d(x)$ denote the signed distance function to $\Gamma$, with unit normal $\nn=\nabla d$ in a tubular neighborhood.  Let $r_\Gamma>0$ be the reach of $\Gamma$, so that the closest-point projection $\pi_\Gamma$ is single-valued whenever $|d(x)|<r_\Gamma$.  Let $\kappa_1,\kappa_2$ be the principal curvatures of $\Gamma$, and put
\[
        K_\Gamma=\frac12\max_{x\in\Gamma}\left(|\kappa_1(x)|+|\kappa_2(x)|\right).
\]
Let $\Th$ be a shape-regular tetrahedral mesh of $U$, with mesh size
\[
        h=\max_{T\in\Th}\diam(T).
\]
Throughout the paper, $C$ denotes a constant independent of $h$ and of the particular cut tetrahedron.

\begin{definition}[Admissible adjusted cut mesh]
\label{def:admissible}
The adjusted mesh $\Th$ is called admissible if the following conditions hold for all sufficiently small $h$.
\begin{enumerate}[(i)]
\item $\Th$ is uniformly shape regular.
\item No vertex lies on $\Gamma$, and every vertex $A$ of a cut tetrahedron satisfies
\[
        |d(A)|\ge \eta h
\]
for a fixed $\eta>0$.
\item If $A_i$ and $A_j$ are two vertices of a tetrahedron, then
\begin{equation}
\label{eq:balanced-distance}
        |d(A_i)|-|d(A_j)|=O(h^2).
\end{equation}
\end{enumerate}
Condition \eqref{eq:balanced-distance} is called the balanced normal adjustment condition.
\end{definition}

The first two parts of Definition \ref{def:admissible} are enough to ensure that a line segment inside one tetrahedron does not cut $\Gamma$ more than once, as shown in Theorem \ref{thm:cut-configurations}. The last part is used only in the high-order proof for the four-edge cut.  In the numerical method and in the constructive interpretation of the assumptions, we do not use a vertex-only snapping of an arbitrary Cartesian or tetrahedral mesh to the two layers $d=\pm \rho h$.  Such a snapping may destroy shape regularity when a same-sign edge is almost normal to the surface.  Instead we use the following tubular offset shell construction.

\begin{algorithm}[t]
\caption{Tubular offset shell mesh}
\label{alg:tubular-offset}
\begin{algorithmic}[1]
\State Construct a quasi-uniform, shape-regular surface triangulation $\mathcal S_h$ of $\Gamma$, with vertices $p_i$ and triangles $K_\sigma=[p_{\sigma_1},p_{\sigma_2},p_{\sigma_3}]$.  In practice $\mathcal S_h$ may be obtained from an ambient tetrahedral mesh by the mesh-adjustment and level-set intersection construction of \cite{Zhao2025Quadrature}.
\State Let $h_\Gamma=\max_{K_\sigma\in\mathcal S_h}\diam(K_\sigma)$ and choose a fixed $\rho>0$ such that $\rho h_\Gamma<r_\Gamma/4$.
\State For each surface vertex set
\[
        p_i^-=p_i-\rho h_\Gamma \nn(p_i),
        \qquad
        p_i^+=p_i+\rho h_\Gamma \nn(p_i).
\]
If the signed distance is not explicitly available, the same step may be implemented by solving along the normal or gradient line for two nearby level surfaces $F=\pm\sigma h_\Gamma$; since $F=a d$ in the tubular neighborhood with $a$ bounded away from zero and Lipschitz, this gives the same balanced condition up to $O(h_\Gamma^2)$.
\State For every surface triangle $K_\sigma=[p_1,p_2,p_3]$, form the triangular prism with vertices $p_1^-,p_2^-,p_3^-,p_1^+,p_2^+,p_3^+$ and split it by a fixed shape-regular prism subdivision, for example
\[
\begin{aligned}
        &\operatorname{conv}\{p_1^-,p_2^-,p_3^-,p_1^+\},\\
        &\operatorname{conv}\{p_2^-,p_3^-,p_1^+,p_2^+\},\\
        &\operatorname{conv}\{p_3^-,p_1^+,p_2^+,p_3^+\}.
\end{aligned}
\]
\State The union of these tetrahedra is the active adjusted mesh.  If a full bulk mesh in $U$ is required, the exterior of the shell can be filled by any uniformly shape-regular tetrahedral mesh; the analysis below only uses the active cut shell.
\end{algorithmic}
\end{algorithm}

This construction gives an admissible adjusted mesh for sufficiently small $h$.  Indeed, the normal coordinate map $p\mapsto p\pm\rho h_\Gamma\nn(p)$ has differential $(I\mp\rho h_\Gamma S)$ on tangent vectors, where $S$ is the Weingarten map.  Since $\rho h_\Gamma\|S\|_{L^\infty}<1/2$ for small $h$, this map is uniformly bi-Lipschitz on every surface triangle.  The tangential dimensions of each prism are comparable to $h_\Gamma$, and the normal thickness is $2\rho h_\Gamma$, hence the fixed prism subdivision produces uniformly shape-regular tetrahedra.  Moreover,
\[
        d(p_i^+)=\rho h_\Gamma,
        \qquad
        d(p_i^-)=-\rho h_\Gamma,
\]
so same-sign vertices in one shell tetrahedron have equal signed distance.  If the offset layers are constructed by fixing $F=\pm\sigma h_\Gamma$ rather than $d=\pm\rho h_\Gamma$, then the identity $F=a d$ with $a$ Lipschitz and $a\ge c>0$ implies $d(A_i)-d(A_j)=O(h_\Gamma^2)$ for same-sign vertices in the same tetrahedron.  The vertex separation required in Definition \ref{def:admissible}(ii) follows at once from the fixed offset thickness.

Next, we give two technical results which are important in the construction and analysis of the proposed method.
\begin{lemma}[One-dimensional non-reintersection]
\label{lem:one-segment}
Assume $hK_\Gamma\le1/4$ and $h<r_\Gamma$.  Let $A,B\in U$ with $|A-B|\le h$.  If the segment $AB$ intersects $\Gamma$ at least twice, then
\[
        \max\{|d(A)|,|d(B)|\}\le K_\Gamma h^2.
\]
Consequently, if the endpoints of a segment in an admissible cut tetrahedron have signed distances bounded below by $\eta h$ and $h$ is sufficiently small, that segment intersects $\Gamma$ at most once.
\end{lemma}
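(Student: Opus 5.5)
Parameterize the segment by $\gamma(t)=A+t(B-A)$, $t\in[0,1]$, and set $g(t)=d(\gamma(t))$. The plan is to show that if $g$ has two zeros, then it is uniformly small on the segment. This uses a second-order bound on $d$ together with the fact that $|\nabla d|=1$. First I would check that the whole segment lies in the tubular neighbourhood where $d$ is smooth. Since $\Gamma$ meets $AB$ and $|A-B|\le h<r_\Gamma$, every point of the segment is within distance $h$ of $\Gamma$. Hence $|d|<r_\Gamma$ along $AB$, and $g\in C^2$ with
\[
g''(t)=(B-A)^{\top}\nabla^2 d(\gamma(t))\,(B-A).
\]

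The key input is the Hessian bound. At a point $x$ with $|d(x)|=\delta$, the eigenvalues of $\nabla^2 d(x)$ are $0$ in the normal direction and $-\kappa_i/(1-\delta\kappa_i)$ tangentially, with the sign convention for curvature matching that of $d$. Since every point of the segment has $\delta\le h$ and $hK_\Gamma\le 1/4$, we get $|\kappa_i|\le 2K_\Gamma$ and $\delta|\kappa_i|\le 1/2$. So each eigenvalue is bounded in modulus by $2|\kappa_i|\le 4K_\Gamma$, and in fact by $2(|\kappa_1|+|\kappa_2|)$ in a suitable sense. The constant needs care here. I want $|g''|\le 2K_\Gamma|B-A|^2$, which would follow from $\|\nabla^2 d\|\le 2K_\Gamma$ on the segment. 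With the crude estimate I only get $4K_\Gamma$. I expect this constant bookkeeping to be the main obstacle. If it fails, I would prove the statement with $CK_\Gamma h^2$, which is all the consequence requires. A more refined route is to use the first zero $t_0$, where $g(t_0)=0$, so $\delta$ is small near $t_0$, and to work with the exact $1/(1-\delta\kappa)$ factor.

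Suppose now that $g(t_1)=g(t_2)=0$ with $0\le t_1<t_2\le 1$. The linear interpolant of $g$ at $t_1,t_2$ is identically zero, so the standard interpolation-error formula gives $|g(t)|\le \tfrac12\max|g''|\,|t-t_1||t-t_2|$ for $t\in[t_1,t_2]$. For $t$ outside $[t_1,t_2]$, say $t=0<t_1$, I would instead use Rolle's theorem: there is $s\in(t_1,t_2)$ with $g'(s)=0$. Then $|g'(t)|\le \max|g''|\,|t-s|$, and integrating from $t_1$ down to $0$ gives
\[
|g(0)|\le \max|g''|\cdot\tfrac12\bigl((s-0)^2-(s-t_1)^2\bigr)\le \tfrac12\max|g''|.
\]
In every case $|g(t)|\le \tfrac12\max|g''|\le \tfrac12\|\nabla^2 d\|_\infty h^2$. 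With the Hessian bound $2K_\Gamma$ this yields $\max\{|d(A)|,|d(B)|\}\le K_\Gamma h^2$.

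For the consequence, take a segment inside an admissible cut tetrahedron. Its length is at most $h$, and by Definition~\ref{def:admissible}(ii) its endpoints satisfy $|d|\ge\eta h$. Endpoints of segments inside the tetrahedron that are not vertices still have $|d|$ comparable to $h$ along the relevant edges; in the application we take vertices. If the segment met $\Gamma$ twice, the first part would give $\eta h\le K_\Gamma h^2$. This is impossible once $h<\eta/K_\Gamma$. Hence the segment intersects $\Gamma$ at most once.
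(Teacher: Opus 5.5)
\textbf{Same approach as the paper.} Your argument matches the paper's proof. You restrict $d$ to the segment, bound its second derivative through the Hessian of the distance function (the paper writes it as $-v\cdot S(I-\delta S)^{-1}v_{\rm tan}$), and apply the two-zero interpolation remainder. Your Rolle detour for $t\notin[t_1,t_2]$ is correct but unnecessary. The Lagrange remainder $g(t)=\tfrac12 g''(\xi)(t-t_1)(t-t_2)$ holds for every $t$, with $\xi$ in the smallest interval containing $t,t_1,t_2$, and the paper applies it directly at the two endpoints.

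\textbf{The constant.} The step you could not close is a real issue, and the paper's proof does not close it either. The paper asserts that $|\delta|\le h$ and $hK_\Gamma\le 1/4$ give $|\delta''|\le 2K_\Gamma$. This does not follow from $K_\Gamma=\tfrac12\max(|\kappa_1|+|\kappa_2|)$, for two reasons:
\begin{enumerate}[(i)]
\item Only $\max_i|\kappa_i|\le 2K_\Gamma$ is available.
\item The factor $(1-\delta\kappa_i)^{-1}$ exceeds $1$ on one side of $\Gamma$, and a twice-crossing segment visits that side between its two zeros.
\end{enumerate}
The bound genuinely fails on a thin torus. Take $\kappa_1=1/a$, so $2K_\Gamma=1/a+1/(R-a)$, with $R=100a$, and take a chord of length $h\in[0.3a,0.49a]$ in a meridian disk. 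This range satisfies $hK_\Gamma\le1/4$. At the chord's midpoint, $|g''|/|B-A|^2=1/c$ with $c=\sqrt{a^2-h^2/4}<a$, and this already exceeds $2K_\Gamma$.

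So the uniform bound $\sup|g''|\le 2K_\Gamma|B-A|^2$, which both you and the paper need, is false in general. Your proposed bootstrap near the first zero does not rescue it. Feeding $|\delta|\le 2K_\Gamma h^2$ back into the Hessian only improves the result to $K_\Gamma h^2/(1-4K_\Gamma^2h^2)$.

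\textbf{What is actually proved.} Your argument, like the paper's once corrected, rigorously gives $\max\{|d(A)|,|d(B)|\}\le K_\Gamma h^2/(1-2hK_\Gamma)\le 2K_\Gamma h^2$. The stated constant would hold if $K_\Gamma$ denoted $\max_\Gamma\max_i|\kappa_i|$, since then $|g''|\le\tfrac43K_\Gamma|B-A|^2$. None of this affects the consequence or Theorem~\ref{thm:cut-configurations}: $\eta h\le 2K_\Gamma h^2$ is impossible once $h<\eta/(2K_\Gamma)$. Adjust your threshold $h<\eta/K_\Gamma$ accordingly.
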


\begin{proof}
Let $x(t)=A+t(B-A)/|B-A|$, $0\le t\le |B-A|$, and set $\delta(t)=d(x(t))$.  In the tubular neighborhood, the closest-point formula
\[
        x(t)=\pi_\Gamma(x(t))+\delta(t)\nn(\pi_\Gamma(x(t)))
\]
can be differentiated.  If $v=(B-A)/|B-A|$ and $S$ is the Weingarten map, then
\[
        \delta'(t)=v\cdot \nn(\pi_\Gamma(x(t))),\qquad
        \delta''(t)=-v\cdot S(I-\delta(t)S)^{-1}v_{\rm tan}.
\]
Since $|\delta(t)|\le h$ and $hK_\Gamma\le1/4$, we have $|\delta''(t)|\le 2K_\Gamma$.  If $\delta$ has two zeros $t_1,t_2$, the interpolation remainder formula gives
\[
        |\delta(t)|\le \frac12\max |\delta''|\, |(t-t_1)(t-t_2)|\le K_\Gamma h^2,
\]
and the result follows by taking $t=0$ and $t=|B-A|$.
\end{proof}
From Lemma \ref{lem:one-segment}, it is straightforward to obtain the cut configurations between the surface and the tetrahedrons. 
\begin{theorem}[Cut configurations]
\label{thm:cut-configurations}
Let $\Th$ be admissible and $h$ sufficiently small.  For every tetrahedron $T=\operatorname{conv}\{x_0,x_1,x_2,x_3\}$, the intersection $\Gamma\cap T$ is one of the following types.
\begin{enumerate}[(i)]
\item If all $F(x_i)$ have the same sign, then $\Gamma\cap T=\emptyset$.
\item If exactly one vertex has sign different from the other three, then $\Gamma\cap T$ is a single triangular patch, and every segment from the exceptional vertex to the opposite face intersects $\Gamma$ at most once.
\item If two vertices are on each side of $\Gamma$, then $\Gamma\cap T$ is a single quadrilateral patch.  If, after relabeling,
\[
        F(x_0),F(x_2)>0,
        \qquad F(x_1),F(x_3)<0,
\]
then for every $M\in x_0x_2$ and $N\in x_1x_3$, the segment $MN$ intersects $\Gamma$ exactly once.
\end{enumerate}
\end{theorem}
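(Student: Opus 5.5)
The plan is to reduce everything to Lemma \ref{lem:one-segment} applied to segments inside $T$. First fix $h$ so small that $K_\Gamma h^2<\eta h$, $hK_\Gamma\le 1/4$ and $h<r_\Gamma$. Every segment $AB\subset T$ then has length at most $h$. By Definition \ref{def:admissible}(ii), whenever $A,B$ are vertices of $T$ we have $\max\{|d(A)|,|d(B)|\}\ge\eta h>K_\Gamma h^2$, so such a segment meets $\Gamma$ at most once. The harder point is that we also need segments whose endpoints are not vertices, for instance $MN$ with $M\in x_0x_2$ and $N\in x_1x_3$, or segments from the exceptional vertex to an arbitrary point of the opposite face. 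For these I will not apply the lemma to the endpoints directly. Instead I would use the convexity structure of $d$ along lines: the proof of the lemma shows $|\delta''|\le 2K_\Gamma$, and this property is inherited by restrictions to any line. Throughout, $F$ and $d$ have the same sign near $\Gamma$, since $F=ad$ with $a>0$.

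\emph{Case (i).} Suppose $\Gamma\cap T\neq\emptyset$ while all vertices have, say, $d>0$, so $d\ge\eta h$ at the vertices. Since $|\nabla d|=1$, every point of $T$ lies within $h$ of a vertex. This alone is not enough, so I use second-order information. A point $x\in T$ is a convex combination of the vertices. Applying the bound $|\delta''|\le 2K_\Gamma$ along two successive segments (vertex to edge point, then edge point to $x$) gives
\[
d(x)\ge \min_i d(x_i)-CK_\Gamma h^2\ge \eta h-CK_\Gamma h^2>0 .
\]
Hence $\Gamma\cap T=\emptyset$.

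\emph{Lower bound for general segments.} The same estimate gives the key auxiliary fact. If $P\in T$ lies on a segment joining points on which $d$ has a fixed sign and modulus at least $\eta h$ (in particular a point on an edge or face spanned by same-sign vertices), then $|d(P)|\ge \eta h-CK_\Gamma h^2\ge \eta h/2$.

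\emph{Case (ii).} Let $x_0$ be the exceptional vertex and $Q$ a point of the opposite face. By the auxiliary fact, $|d(Q)|\ge\eta h/2$ and $Q$ has the sign of $x_1,x_2,x_3$. Lemma \ref{lem:one-segment} with $\eta h/2$ in place of $\eta h$ then shows that $x_0Q$ meets $\Gamma$ at most once. Since $x_0$ and $Q$ have opposite signs, it meets $\Gamma$ exactly once. Thus $\Gamma\cap T$ is the graph of a continuous radial function over the face $x_1x_2x_3$; continuity follows from transversality, as $\delta'\ne0$ at a simple crossing. The cut therefore meets exactly the three edges from $x_0$ and is a single topological triangle.

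\emph{Case (iii).} For $M\in x_0x_2$ and $N\in x_1x_3$, the auxiliary fact gives $d(M)\ge\eta h/2$ and $d(N)\le-\eta h/2$. The lemma then gives exactly one crossing on $MN$. Parametrizing $T$ by $(s,t,\lambda)$ with $M=M(s)$, $N=N(t)$, $x=(1-\lambda)M+\lambda N$, covers $T$. The crossing $\lambda^*(s,t)$ is continuous on $[0,1]^2$, so $\Gamma\cap T$ is a single patch homeomorphic to a square. Its boundary lies on the four mixed-sign edges $x_0x_1,x_1x_2,x_2x_3,x_3x_0$, which makes it a quadrilateral.

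The main obstacle is the auxiliary lower bound at non-vertex points, since Definition \ref{def:admissible}(ii) constrains only vertices. I would handle it through the one-dimensional Taylor bound from the proof of Lemma \ref{lem:one-segment}, making sure all relevant points stay in the tubular neighborhood where that bound holds.
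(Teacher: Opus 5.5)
Your proposal is correct and follows the paper's route: the paper only says the theorem follows from Lemma~\ref{lem:one-segment}, and your auxiliary bound $|d|\ge \eta h-CK_\Gamma h^2$ at points of same-sign edges and faces is exactly the step the paper leaves implicit when it applies that lemma to segments such as $MN$ and $x_0Q$ whose endpoints are not mesh vertices (you obtain it from the same estimate $|\delta''|\le CK_\Gamma$, which is valid because a cut $T$ lies within distance $h$ of $\Gamma$). One point to tighten is continuity of the crossing parameter: a unique crossing does not by itself give $\delta'\neq 0$, but if $\delta(t^*)=\delta'(t^*)=0$ then $|\delta|\le K_\Gamma h^2$ on the whole segment, which contradicts $|\delta|\ge \eta h/2$ at the endpoints, so transversality does hold.
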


\section{Local parameterizations and Jacobians}
\label{sec:local-parameterizations}
This section gives the exact local maps used to lift the finite element functions.  All local maps are from a piecewise linear approximation $\Gamma_h\cap T$ to the exact patch $\Gamma\cap T$.

\subsection{Triangular cut: vertex projection}
\label{subsec:case1}
Assume $F(x_0)$ has sign opposite to $F(x_1),F(x_2),F(x_3)$.  Let $B_i$ be the unique intersection of $\Gamma$ with the edge $x_0x_i$, $i=1,2,3$.  The linear patch is the triangle
\[
        K_h=\operatorname{conv}\{B_1,B_2,B_3\}.
\]
For $X\in K_h$, define $Y=\Phi_T(X)$ to be the unique point of
\(\Gamma\) on the ray starting at \(x_0\) and passing through \(X\).
The cut-configuration theorem implies that this point lies in \(T\).
If $X=X(\mu_1,\mu_2)$ is the affine map from the reference triangle to $K_h$, then $Y=Y(\mu_1,\mu_2)$ is characterized by
\begin{equation}
\label{eq:case1-implicit}
        F(Y)=0,
        \qquad
        (Y-x_0)\times (X(\mu_1,\mu_2)-x_0)=0.
\end{equation}
\begin{figure}
    \centering
    \includegraphics[width=0.4\linewidth]{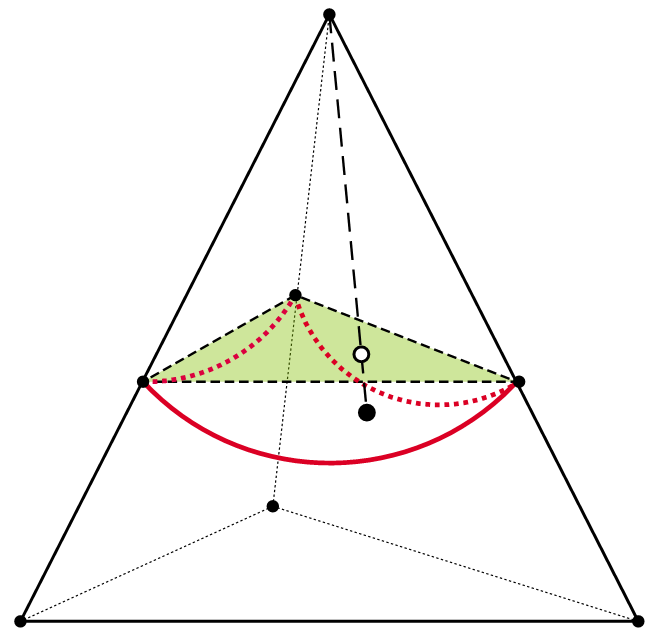}
    \caption{Illustration of the parametrization in Section \ref{subsec:case1}.}
    \label{fig:para-1-3}
\end{figure}
Equivalently, one may use two independent scalar equations from the cross product.  The implicit function theorem gives
\begin{equation}
\label{eq:case1-jacobian}
        J_T(\mu_1,\mu_2)=\frac{\partial Y}{\partial(\mu_1,\mu_2)}
        =-\left(\frac{\partial H}{\partial Y}\right)^{-1}
          \frac{\partial H}{\partial(\mu_1,\mu_2)},
\end{equation}
where $H$ denotes the three equations in \eqref{eq:case1-implicit}.  The $3\times2$ matrix $J_T$ is the local surface Jacobian, and the first fundamental form is
\[
        G_T=J_T^TJ_T.
\]

\subsection{Quadrilateral cut: ruled two-rail parameterization}
\label{subsec:case2}
Assume, after relabeling, that
\[
        F(x_0),F(x_2)>0,
        \qquad
        F(x_1),F(x_3)<0.
\]
The two same-sign edges are the rails
\begin{equation}
\label{eq:rails}
        M(u)=(1-u)x_0+ux_2,
        \qquad
        N(v)=(1-v)x_1+vx_3,
        \qquad 0\le u,v\le1.
\end{equation}
For every $(u,v)\in[0,1]^2$, Theorem \ref{thm:cut-configurations} gives a unique $w=w(u,v)\in(0,1)$ such that
\begin{equation}
\label{eq:ruled-param}
        Y(u,v)=(1-w(u,v))M(u)+w(u,v)N(v)\in\Gamma.
\end{equation}
Thus
\begin{equation}
\label{eq:w-equation}
        F\big((1-w)M(u)+wN(v)\big)=0.
\end{equation}
Let
\[
        V(u,v)=N(v)-M(u),
        \qquad n_F(Y)=\nabla F(Y).
\]
Since $M(u)$ and $N(v)$ stay on opposite sides of $\Gamma$ with signed distances comparable to $h$, we have
\begin{equation}
\label{eq:transversality}
        |n_F(Y)\cdot V(u,v)|\ge c h.
\end{equation}
Implicit differentiation of \eqref{eq:w-equation} gives
\begin{equation}
\label{eq:w-derivatives}
        w_u=-\frac{(1-w)n_F(Y)\cdot M'(u)}{n_F(Y)\cdot V(u,v)},
        \qquad
        w_v=-\frac{w\,n_F(Y)\cdot N'(v)}{n_F(Y)\cdot V(u,v)}.
\end{equation}
Consequently the tangent vectors are
\begin{equation}
\label{eq:case2-tangent}
\begin{aligned}
        Y_u&=(1-w)\left[M'(u)-\frac{n_F(Y)\cdot M'(u)}{n_F(Y)\cdot V(u,v)}V(u,v)\right],\\
        Y_v&=w\left[N'(v)-\frac{n_F(Y)\cdot N'(v)}{n_F(Y)\cdot V(u,v)}V(u,v)\right].
\end{aligned}
\end{equation}
The local Jacobian for the ruled square is $J=[Y_u,Y_v]$ and $G=J^TJ$.

\begin{figure}
    \centering
    \includegraphics[width=0.4\linewidth]{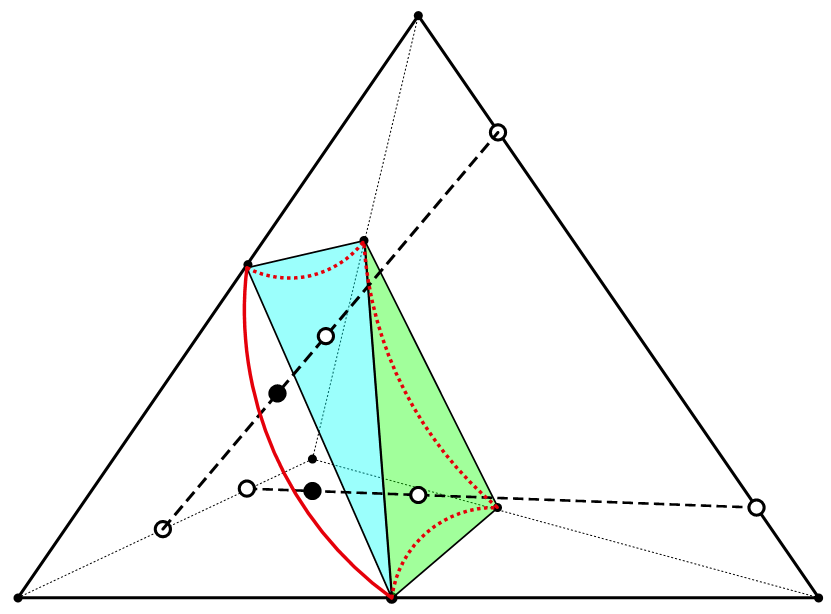}
    \caption{Illustration of the parametrization in Section \ref{subsec:case2}.}
    \label{fig:para-2-2}
\end{figure}

For finite elements, the four edge-intersection points form a generally
non-coplanar spatial quadrilateral, which is represented by two planar
triangles.  Let $P_{ij}$ denote the zero of $F$ on the edge $x_ix_j$.
We choose the diagonal \(P_{12}P_{30}\) and set
\[
        K_{1,h}=\triangle P_{01}P_{12}P_{30},
        \qquad
        K_{2,h}=\triangle P_{12}P_{23}P_{30}.
\]
For $X\in K_{1,h}\cup K_{2,h}$ with tetrahedral barycentric coordinates $(\alpha,\beta,\gamma,\delta)$ relative to $(x_0,x_1,x_2,x_3)$, define
\begin{equation}
\label{eq:ratio-map}
        u(X)=\frac{\gamma}{\alpha+\gamma},
        \qquad
        v(X)=\frac{\delta}{\beta+\delta},
        \qquad
        w_h(X)=\beta+\delta.
\end{equation}
Then
\begin{equation}
\label{eq:X-MN}
        X=(1-w_h(X))M(u(X))+w_h(X)N(v(X)).
\end{equation}
The lift on the two triangles is
\begin{equation}
\label{eq:case2-lift}
        \Phi_T(X)=Y(u(X),v(X)).
\end{equation}

\subsection{Local bijectivity and compatibility of the global lift}
\label{subsec:global-lift}

We next prove that the local maps are bijective and agree pointwise on
common faces.  The balanced normal condition is not needed for this
topological result; it enters only in the uniform metric and high-order
derivative estimates below.

\begin{lemma}[Bijectivity of the triangular lift]
\label{lem:triangular-bijection}
The vertex-projection map
\[
        \Phi_T:K_h\longrightarrow\Gamma\cap T
\]
in a three-edge cut is a homeomorphism.
\end{lemma}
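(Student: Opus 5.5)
The plan is to realize $\Phi_T$ as the restriction of the radial projection from $x_0$, and to build its inverse explicitly from the same rays.

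\textbf{Well-definedness and continuity.} Let $D=\operatorname{conv}\{x_1,x_2,x_3\}$ be the face opposite $x_0$. Every ray from $x_0$ through a point $X\in K_h$ leaves $T$ through a unique point $Z(X)\in D$. The map $X\mapsto Z(X)$ is the central projection from $x_0$. Because $K_h$ and $D$ are planar triangles not containing $x_0$, and $B_i\in x_0x_i$, this map is a projective bijection $K_h\to D$ sending $B_i\mapsto x_i$, and it is continuous in both directions.

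By Theorem~\ref{thm:cut-configurations}(ii), together with Lemma~\ref{lem:one-segment} applied to $x_0Z$ (the endpoints have $|d|\ge\eta h$ and opposite signs), the segment $x_0Z$ meets $\Gamma$ exactly once. The vertex signs are opposite, so at least one intersection exists. Call this intersection point $\Psi(Z)$, and set $\Phi_T=\Psi\circ Z$.

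For continuity of $\Psi$ I would write $\Psi(Z)=x_0+t(Z)(Z-x_0)$, where $t(Z)\in(0,1)$ is the unique zero of $g_Z(t)=F(x_0+t(Z-x_0))$. Continuity follows by a standard compactness argument. Take $Z_k\to Z$ and a subsequence with $t(Z_k)\to t^*$. Then $g_Z(t^*)=0$, and $t^*\in[0,1]$. Since $F(x_0)\neq0\neq F(Z)$, we have $t^*\in(0,1)$. Uniqueness of the zero then gives $t^*=t(Z)$. Alternatively, one could invoke the implicit function theorem via \eqref{eq:case1-implicit}, but that would require a transversality bound, and the compactness route avoids it.

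\textbf{Injectivity and surjectivity.} Distinct $X\in K_h$ give distinct rays from $x_0$. Such rays meet only at $x_0\notin\Gamma$, so $\Phi_T$ is injective.

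For surjectivity, take $Y\in\Gamma\cap T$. Then $Y\neq x_0$, and the ray from $x_0$ through $Y$ stays in $T$ (by convexity) until it exits through a point $Z\in D$. Here I need to rule out exit through a different face. Writing $Y=\lambda_0x_0+\sum_{i\ge1}\lambda_i x_i$ with $\lambda_0<1$, the ray $x_0+s(Y-x_0)$ has barycentric coordinates $\bigl(1-s(1-\lambda_0),\,s\lambda_1,\,s\lambda_2,\,s\lambda_3\bigr)$. It therefore leaves $T$ exactly when the $x_0$-coordinate vanishes, which happens on $D$. Since $Y$ lies on the segment $x_0Z$, and that segment has a unique $\Gamma$-point, we get $Y=\Psi(Z)$. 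Hence $Y=\Phi_T(X)$ with $X=Z^{-1}(Z)\in K_h$.

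\textbf{Conclusion.} $\Phi_T$ is a continuous bijection from the compact set $K_h$ onto the Hausdorff space $\Gamma\cap T$, so it is a homeomorphism.

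\textbf{Main obstacle.} The only delicate step is the uniqueness of the intersection on every segment $x_0Z$, not just on the edges $x_0x_i$. This is exactly what Theorem~\ref{thm:cut-configurations}(ii) supplies. To be careful, I would verify it through Lemma~\ref{lem:one-segment}: $Z$ is a convex combination of vertices sharing one sign, and that requires $|d(Z)|$ to be bounded below. I would obtain this bound from condition~(iii) of Definition~\ref{def:admissible}, combined with the near-linearity of $d$ on an $O(h)$ set, which gives $|d(Z)|\ge\eta h-Ch^2$.
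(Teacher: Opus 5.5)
Your proof is correct and follows essentially the paper's route. Radial projection from $x_0$ identifies $K_h$ homeomorphically with the opposite face, and Theorem~\ref{thm:cut-configurations}(ii) gives a unique intersection on each segment $x_0Z$; your compactness argument, explicit surjectivity check, and compact-to-Hausdorff conclusion just fill in steps the paper only asserts, namely continuous dependence on $Z$ and that radial projection from $\Gamma\cap T$ onto the face is a homeomorphism. One small remark: the bound $|d(Z)|\ge\eta h-Ch^2$ needs only Definition~\ref{def:admissible}(ii), the common sign of $x_1,x_2,x_3$, and $d\in C^2$, not the balanced condition (iii), which matches the paper's remark that (iii) plays no role in this topological result.
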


\begin{proof}
Let \(F_0=\operatorname{conv}\{x_1,x_2,x_3\}\) be the face opposite
\(x_0\), and write
\[
        B_i=x_0+t_i(x_i-x_0),\qquad 0<t_i<1,\qquad i=1,2,3.
\]
For
\[
        X=\sum_{i=1}^3\lambda_iB_i,\qquad
        \lambda_i\ge0,\qquad \sum_i\lambda_i=1,
\]
set
\[
        s(X)=\sum_{i=1}^3\lambda_it_i,
        \qquad
        P(X)=\sum_{i=1}^3\frac{\lambda_it_i}{s(X)}x_i.
\]
Then \(P(X)\in F_0\) and
\[
        X=x_0+s(X)(P(X)-x_0).
\]
Thus \(P(X)\) is the intersection of the ray \(x_0X\) with \(F_0\).
Conversely, if \(P=\sum_i\mu_ix_i\in F_0\), then
\[
        s(P)=\left(\sum_{i=1}^3\frac{\mu_i}{t_i}\right)^{-1},
        \qquad
        \lambda_i(P)=\frac{s(P)\mu_i}{t_i}
\]
give the unique \(X\in K_h\) on the ray \(x_0P\).  Hence radial
projection from \(x_0\) is a homeomorphism from \(K_h\) to \(F_0\).

By Theorem \ref{thm:cut-configurations}, every segment \(x_0P\), with
\(P\in F_0\), has exactly one intersection with \(\Gamma\), and this
intersection depends continuously on \(P\).  Radial projection from the
exact patch \(\Gamma\cap T\) to \(F_0\) is therefore also a
homeomorphism.  The map \(\Phi_T\) is the composition of the first
radial projection with the inverse of the second one, and the result
follows.
\end{proof}

\begin{lemma}[Bijectivity of the ratio map]
\label{lem:ratio-bijection}
For a four-edge cut, the map
\[
        \mathcal R_T:
        K_{1,h}\cup K_{2,h}\longrightarrow[0,1]^2,
        \qquad
        \mathcal R_T(X)=(u(X),v(X)),
\]
defined in \eqref{eq:ratio-map}, is a homeomorphism.  Consequently,
\(\Phi_T=Y\circ\mathcal R_T\) is a homeomorphism from
\(K_{1,h}\cup K_{2,h}\) to \(\Gamma\cap T\).
\end{lemma}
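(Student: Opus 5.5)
The plan is to work in the two-rail coordinates of the whole tetrahedron. The map
\[
        \Psi(u,v,w)=(1-w)M(u)+wN(v),\qquad (u,v,w)\in[0,1]^3,
\]
has barycentric coordinates
\[
        (\alpha,\beta,\gamma,\delta)=\bigl((1-w)(1-u),\,w(1-v),\,(1-w)u,\,wv\bigr).
\]
It is therefore a bijection from $[0,1]^2\times(0,1)$ onto $\{X\in T:\alpha+\gamma>0,\ \beta+\delta>0\}$, and \eqref{eq:ratio-map} is its explicit inverse. This region contains $K_{1,h}\cup K_{2,h}$, because every point of these triangles is a convex combination of the $P_{ij}$, each of which lies in the open edge $x_ix_j$. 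This is also why \eqref{eq:X-MN} holds. Consequently $\mathcal R_T$ is continuous, and the fibre of $\mathcal R_T$ over $(u,v)$ is exactly the segment $M(u)N(v)$ intersected with $K_{1,h}\cup K_{2,h}$.

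The first step is the boundary behaviour. Direct evaluation gives
\[
        \mathcal R_T(P_{01})=(0,0),\quad \mathcal R_T(P_{12})=(1,0),\quad \mathcal R_T(P_{23})=(1,1),\quad \mathcal R_T(P_{30})=(0,1).
\]
Each boundary edge lies in a face of $T$. For instance, $P_{01}P_{12}$ lies in $\{\delta=0\}$, so $v\equiv0$ there, and $u=\gamma/(\alpha+\gamma)$ is strictly monotone along the edge. Hence $\mathcal R_T$ maps the boundary of the quadrilateral $P_{01}P_{12}P_{23}P_{30}$ homeomorphically onto $\partial[0,1]^2$.

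Writing $X=(1-s)P_{12}+sP_{30}$ on the diagonal, $\alpha,\delta$ are proportional to $s$ and $\beta,\gamma$ to $1-s$. Thus $u$ is strictly decreasing and $v$ strictly increasing in $s$. The image of the diagonal is therefore the graph of a strictly decreasing curve from $(1,0)$ to $(0,1)$, which splits the square into two closed Jordan regions $D_1\ni(0,0)$ and $D_2\ni(1,1)$.

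The main step, and the one I expect to be the real obstacle, is injectivity of $\mathcal R_T$ on each triangle $K_{i,h}$. By the fibre description, injectivity fails only if some segment $M(u)N(v)$ meets $K_{i,h}$ in two points, i.e.\ lies in the plane $\Pi_i$ of $K_{i,h}$ on a subsegment. I would first try to exclude this by showing that $\Pi_1$ meets the line $x_0x_2$ and the line $x_1x_3$ in at most one point each, and that these points are not positioned so that the connecting segment runs inside $K_{1,h}$. For $\Pi_1$, this uses that $P_{01},P_{12},P_{30}$ lie in the open edges around $x_1$ and $x_0$, so that $x_1$ is strictly separated from $x_3$ by $\Pi_1$. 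If this combinatorial argument becomes messy, the fallback is to compute the Jacobian of the composition of the affine parameterization of $K_{i,h}$ with $\mathcal R_T$ and check that its sign is constant and nonzero. This Jacobian is a rational function whose numerator factors through the positive edge parameters $t_{ij}$. Given local injectivity, the standard fact applies: a local homeomorphism of a closed disk that maps the boundary homeomorphically onto a Jordan curve is a homeomorphism onto the closed region it bounds. It follows that $\mathcal R_T$ maps $K_{1,h}$ onto $D_1$ and $K_{2,h}$ onto $D_2$ bijectively. Since the two restrictions agree on the common diagonal and $D_1\cap D_2$ is exactly the image of that diagonal, $\mathcal R_T$ is a continuous bijection from a compact set onto $[0,1]^2$, hence a homeomorphism.

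For the consequence, Theorem \ref{thm:cut-configurations}(iii) shows that $(u,v)\mapsto Y(u,v)$ is well defined. It is continuous (indeed $C^1$ by \eqref{eq:transversality} and the implicit function theorem), and it is injective because $\Psi$ is injective. It is also surjective onto $\Gamma\cap T$. To see this, note that a point of $\Gamma\cap T$ cannot lie on the rail $x_0x_2$ or on $x_1x_3$: each rail has endpoints of equal sign, so by Lemma \ref{lem:one-segment} it cannot cross $\Gamma$. Every point of $\Gamma\cap T$ therefore has two-rail coordinates $(u,v,w)$ with $w\in(0,1)$, and uniqueness in Theorem \ref{thm:cut-configurations}(iii) forces $w=w(u,v)$. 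Compactness then makes $Y$ a homeomorphism of $[0,1]^2$ onto $\Gamma\cap T$, and $\Phi_T=Y\circ\mathcal R_T$ is a homeomorphism.
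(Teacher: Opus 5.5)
You follow the same architecture as the paper: two-rail barycentric inversion, boundary edges onto $\partial[0,1]^2$, a monotone image of the diagonal splitting the square, injectivity on each triangle, then a planar global-inverse argument. The parts you carry out are correct, and your exclusion of the rails in proving surjectivity of $Y$ is more careful than the paper's. The gap is the step you flag yourself as the real obstacle: injectivity, or local invertibility, of $\mathcal R_T$ on each $K_{i,h}$ is never established. The one concrete fact you offer for it is false. The plane $\Pi_1$ through $P_{01},P_{12},P_{30}$ does \emph{not} separate $x_1$ from $x_3$. Let $\phi_1$ be an affine function vanishing exactly on $\Pi_1$. Since each $P_{ij}$ lies in the open edge $x_ix_j$, the pairs $\phi_1(x_0),\phi_1(x_1)$; $\phi_1(x_1),\phi_1(x_2)$; $\phi_1(x_3),\phi_1(x_0)$ have strictly opposite signs. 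None can vanish, since otherwise all four vertices would lie in $\Pi_1$. Hence $x_1,x_3$ lie on one side and $x_0,x_2$ on the other. Had your claim been true, $\Pi_1$ would cut the rail $x_1x_3$, which works against the exclusion you want. Your fallback, that the Jacobian ``factors through positive edge parameters'', is only asserted. That computation is the actual content of the paper's proof, which obtains
\[
\det\frac{\partial(u,v)}{\partial(p,q)}=\frac{(1-b)d}{D_+^2D_-^2}\bigl[(1-p-q)a(1-a)+pb(1-a)+qa(1-d)\bigr]>0
\]
on $K_{1,h}$, and an analogous positive expression on $K_{2,h}$.

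The correct sign pattern gives you a short complete argument, arguably simpler than the paper's. Up to a global sign, $\phi_1<0$ on the whole rail $x_0x_2$ and $\phi_1>0$ on the whole rail $x_1x_3$. So every segment $M(u)N(v)$ crosses $\Pi_1$ exactly once and never lies in it. By your fibre description, $\mathcal R_T$ is then injective on $K_{1,h}$. The same alternation holds for the plane $\Pi_2$ through $P_{12},P_{23},P_{30}$, giving injectivity on $K_{2,h}$. This is global injectivity on each closed triangle, so the local-to-global theorem is unnecessary. $\mathcal R_T|_{K_{i,h}}$ is a homeomorphism onto its image. Invariance of domain, together with your boundary and diagonal description, identifies that image with $D_i$. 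With this step supplied, the rest of your proposal goes through as written.
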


\begin{proof}
Introduce
\[
        B(u,v,w)=(1-w)M(u)+wN(v).
\]
The tetrahedral barycentric coordinates of this point are
\[
        \alpha=(1-w)(1-u),\quad
        \beta=w(1-v),\quad
        \gamma=(1-w)u,\quad
        \delta=wv.
\]
For \(0<w<1\), these relations can be inverted uniquely:
\begin{equation}
\label{eq:barycentric-ratio-inverse}
        w=\beta+\delta,\qquad
        u=\frac{\gamma}{\alpha+\gamma},\qquad
        v=\frac{\delta}{\beta+\delta}.
\end{equation}
Theorem \ref{thm:cut-configurations} gives a unique root
\(w(u,v)\in(0,1)\) on every segment \(M(u)N(v)\).  It follows from
\eqref{eq:barycentric-ratio-inverse} that
\[
        Y:[0,1]^2\longrightarrow\Gamma\cap T
\]
is one-to-one and onto.  Transversality \eqref{eq:transversality}
implies continuity of \(w\), so \(Y\) is a homeomorphism.

It remains to prove the same assertion for \(\mathcal R_T\).  Write the
four edge roots in the form
\[
\begin{aligned}
        P_{01}&=(1-a)x_0+ax_1,&
        P_{12}&=(1-b)x_2+bx_1,\\
        P_{23}&=(1-c)x_2+cx_3,&
        P_{30}&=(1-d)x_0+dx_3,
\end{aligned}
\qquad 0<a,b,c,d<1.
\]
On \(K_{1,h}\), put
\[
        X=(1-p-q)P_{01}+pP_{12}+qP_{30},
        \qquad p,q\ge0,\quad p+q\le1.
\]
Let \(D_+=\alpha+\gamma\) and \(D_-=\beta+\delta\).  Direct
differentiation gives
\begin{equation}
\label{eq:ratio-Jacobian-first}
\det\frac{\partial(u,v)}{\partial(p,q)}
=
\frac{(1-b)d}{D_+^2D_-^2}
\left[
(1-p-q)a(1-a)+pb(1-a)+qa(1-d)
\right]>0.
\end{equation}
On \(K_{2,h}\), put
\[
        X=(1-p-q)P_{12}+pP_{23}+qP_{30}.
\]
The analogous calculation gives
\begin{equation}
\label{eq:ratio-Jacobian-second}
\det\frac{\partial(u,v)}{\partial(p,q)}
=
\frac{b(1-d)}{D_+^2D_-^2}
\left[
(1-p-q)c(1-b)+pc(1-c)+qd(1-c)
\right]>0.
\end{equation}
Thus \(\mathcal R_T\) is an orientation-preserving local
homeomorphism in the interior of each triangle.

The four exterior edges are mapped, respectively, to \(v=0\),
\(u=0\), \(u=1\), and \(v=1\).  On the common diagonal, write
\[
        X(t)=tP_{12}+(1-t)P_{30},\qquad 0\le t\le1.
\]
Then
\[
        u(t)=
        \frac{t(1-b)}
        {(1-t)(1-d)+t(1-b)},
        \qquad
        v(t)=
        \frac{(1-t)d}
        {tb+(1-t)d}.
\]
The first function is strictly increasing and the second is strictly
decreasing.  Hence the diagonal is mapped to a simple curve joining
\((0,1)\) to \((1,0)\).  Equations
\eqref{eq:ratio-Jacobian-first}--\eqref{eq:ratio-Jacobian-second},
together with the one-to-one boundary maps, show by the planar global
inverse theorem (equivalently, by the Brouwer degree, to which every
preimage contributes \(+1\)) that the two triangles map
homeomorphically to the two regions separated by this curve.  Their
images have disjoint interiors and their union is \([0,1]^2\).
Therefore \(\mathcal R_T\) is a homeomorphism.  Since \(Y\) is a
homeomorphism, so is \(\Phi_T=Y\circ\mathcal R_T\).
\end{proof}

We now introduce a construction intrinsic to a cut face.  Let
\[
        \mathcal F=\operatorname{conv}\{e,z_1,z_2\}
\]
be a mesh face on which \(e\) has sign opposite to \(z_1,z_2\), and let
\(Q_i=ez_i\cap\Gamma\).  The discrete trace on this face is the segment
\(L_{\mathcal F,h}=Q_1Q_2\).  For \(X\in L_{\mathcal F,h}\), let
\(Z(X)\in z_1z_2\) be the intersection of the ray \(eX\) with the
opposite edge.  Define the canonical face lift
\begin{equation}
\label{eq:canonical-face-lift}
        \Phi_{\mathcal F}(X)
        =
        \text{the unique point of }eZ(X)\cap\Gamma.
\end{equation}
The two-dimensional version of Lemma \ref{lem:triangular-bijection}
shows that \(\Phi_{\mathcal F}\) is a homeomorphism from
\(L_{\mathcal F,h}\) to \(\Gamma\cap\mathcal F\).

\begin{theorem}[Global lift homeomorphism]
\label{thm:global-lift-homeomorphism}
Assume that the active tetrahedra form a conforming tetrahedral complex
covering the closed surface \(\Gamma\), and that the hypotheses of
Theorem \ref{thm:cut-configurations} hold.  Then the local maps
\(\Phi_T\) agree pointwise on common faces and define a continuous
bijection
\[
        \Phi_h:\Gamma_h\longrightarrow\Gamma.
\]
In particular, \(\Phi_h\) is a homeomorphism and
\[
        S_{h,r}^l\subset H^1(\Gamma).
\]
\end{theorem}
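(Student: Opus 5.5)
The plan is to reduce compatibility to a statement about faces: on every cut face $\mathcal F$ of an active tetrahedron $T$, the restriction of $\Phi_T$ to $\Gamma_h\cap\mathcal F=L_{\mathcal F,h}$ equals the canonical face lift $\Phi_{\mathcal F}$ of \eqref{eq:canonical-face-lift}. Since $\Phi_{\mathcal F}$ depends only on $\mathcal F$ and not on the tetrahedron containing it, two tetrahedra sharing $\mathcal F$ induce the same map there, and pointwise agreement follows. Lower-dimensional intersections are handled by the same argument. Two tetrahedra meeting only along an edge meet $\Gamma_h$ there only at an edge root, and every local map fixes edge roots, because on an edge the defining segment intersects $\Gamma$ exactly once at that root.

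First step (three-edge cut). Let $x_0$ be the exceptional vertex and $\mathcal F=\operatorname{conv}\{x_0,x_i,x_j\}$. For $X\in B_iB_j\subset\mathcal F$, the ray from $x_0$ through $X$ stays in the plane of $\mathcal F$ and meets the edge $x_ix_j$ at $Z(X)$. Hence $\Phi_T(X)$, the unique point of $\Gamma$ on $x_0Z(X)$ given by Theorem \ref{thm:cut-configurations}(ii), coincides with $\Phi_{\mathcal F}(X)$ with $e=x_0$. The face $\operatorname{conv}\{x_1,x_2,x_3\}$ is uncut and needs no treatment.

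Second step (four-edge cut). Every face contains exactly one of the four edge roots on each of two edges. Take, for example, $\mathcal F=\operatorname{conv}\{x_0,x_1,x_2\}$, where $\delta=0$; this is a face containing $P_{01}$ and $P_{12}$, with the exceptional vertex $e=x_1$. On this face $v(X)=0$ and $N(0)=x_1$, so by \eqref{eq:X-MN} the point $X$ lies on the segment from $x_1$ to $M(u(X))\in x_0x_2$. That segment is exactly the ray $eX$ continued to the opposite edge, so $M(u(X))=Z(X)$. Then $\Phi_T(X)=Y(u,0)$ is the unique point of $\Gamma$ on $M(u)N(0)=Z(X)e$, which is $\Phi_{\mathcal F}(X)$.

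The same reasoning covers the other faces:
\begin{itemize}
\item on $\alpha=0$ (vertices $x_1,x_2,x_3$) we have $u=1$ and $e=x_2$;
\item on $\beta=0$ we have $v=1$ and $e=x_3$;
\item on $\gamma=0$ we have $u=0$ and $e=x_0$.
\end{itemize}
In each case the rail point on the shared-sign edge plays the role of $Z(X)$. One must also check that $\Gamma_h\cap\mathcal F$ is exactly the boundary edge of $K_{1,h}\cup K_{2,h}$ lying in that face. This holds because the diagonal $P_{12}P_{30}$ is interior to $T$, and the four exterior edges map to $v=0$, $u=0$, $u=1$, $v=1$ as recorded in the proof of Lemma \ref{lem:ratio-bijection}.

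Global assembly. Each $\Phi_T$ is a homeomorphism onto $\Gamma\cap T$ by Lemmas \ref{lem:triangular-bijection} and \ref{lem:ratio-bijection}, and the local maps agree on overlaps, so $\Phi_h$ is well defined and continuous. It is surjective because the active tetrahedra cover $\Gamma$. For injectivity, suppose $\Phi_T(X)=\Phi_{T'}(X')=y$ with $T\neq T'$. Then $y\in\Gamma\cap T\cap T'$, which lies in a common face, edge or vertex. By the face identification, $y=\Phi_{\mathcal F}(X'')$ for some $X''\in\Gamma_h\cap T\cap T'$. Local injectivity of $\Phi_T$ and of $\Phi_{T'}$ then forces $X=X''=X'$.

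A continuous bijection from the compact space $\Gamma_h$ to the Hausdorff space $\Gamma$ is a homeomorphism. For $\varphi_h\in S_h^r$, the lifted function $\varphi_h^l$ is continuous on $\Gamma$, and on each patch it is $C^1$ up to the boundary as a composition with the smooth local maps; the regularity of these maps comes from transversality \eqref{eq:transversality} and the implicit function theorem. A continuous, piecewise $H^1$ function on a finite partition of $\Gamma$ into Lipschitz patches lies in $H^1(\Gamma)$, which gives $S_{h,r}^l\subset H^1(\Gamma)$.

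I expect the main obstacle to be the four-edge face identification: verifying carefully that on each face the ratio map makes the rail point coincide with the ray–edge point $Z(X)$. A secondary point is that the chosen diagonal does not leave $\Gamma_h$ faces where the two triangulations disagree; the face-compatible diagonal choice is assumed in the construction.
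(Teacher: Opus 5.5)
Your proof is correct and follows essentially the same route as the paper. Both arguments identify each face restriction with the canonical face lift $\Phi_{\mathcal F}$ (your face table matches the paper's), glue the local maps, get surjectivity from the local bijectivity lemmas, prove injectivity by locating $y$ on the mesh skeleton, and finish with the compact-to-Hausdorff argument. Your injectivity case split by $T\cap T'$ rather than by the stratum containing $y$ is equivalent. One refinement for the $H^1$ inclusion: you need $(\Phi_T|_K)^{-1}$ to be $C^1$, which requires $\Phi_T$ to have a full-rank Jacobian (positive ratio-map determinants and independent projected rail directions), not only the smoothness of $\Phi_T$ that the implicit function theorem provides.
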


\begin{proof}
Consider first a three-edge cut.  Every cut face contains the
exceptional vertex, and the restriction of the vertex-projection map to
that face is exactly \(\Phi_{\mathcal F}\) in
\eqref{eq:canonical-face-lift}.  The opposite face has vertices of one
sign and does not meet \(\Gamma\).

For a four-edge cut with the labeling in
\eqref{eq:rails}, the restrictions to the four faces are
\[
\begin{array}{c|c|c}
\mathcal F & \text{vanishing barycentric coordinate}
            & \text{restricted ratio variable}\\ \hline
x_0x_1x_2 & \delta=0 & v=0\\
x_1x_2x_3 & \alpha=0 & u=1\\
x_0x_2x_3 & \beta=0  & v=1\\
x_0x_1x_3 & \gamma=0 & u=0 .
\end{array}
\]
For example, on \(x_0x_1x_2\) the ruled segment is
\(x_1M(u)\), so \(Y(u,0)\) is precisely the canonical face lift.
The other three cases are identical.  Consequently, for two
tetrahedra \(T^+\) and \(T^-\) sharing a cut face \(\mathcal F\),
\begin{equation}
\label{eq:pointwise-face-compatibility}
        \Phi_{T^+}|_{\Gamma_h\cap\mathcal F}
        =
        \Phi_{\mathcal F}
        =
        \Phi_{T^-}|_{\Gamma_h\cap\mathcal F}.
\end{equation}
The two triangles inside a four-edge cut also agree pointwise on their
artificial diagonal because both use the same ratio map and the same
ruled map \(Y\).  The gluing lemma and
\eqref{eq:pointwise-face-compatibility} therefore give a continuous
global map \(\Phi_h\).

To prove surjectivity, take \(y\in\Gamma\).  It belongs to
\(\Gamma\cap T\) for at least one active tetrahedron \(T\), and Lemmas
\ref{lem:triangular-bijection} and \ref{lem:ratio-bijection} give a
local preimage in \(\Gamma_h\cap T\).

For injectivity, suppose that \(\Phi_h(X)=\Phi_h(X')=y\).  If \(y\)
lies in the interior of a tetrahedron, both preimages lie in the
corresponding local cut patch and local injectivity gives \(X=X'\).
If \(y\) lies in the relative interior of a mesh face, every preimage
lies on the common discrete trace and the injectivity of
\(\Phi_{\mathcal F}\) gives the same conclusion.  If \(y\) lies on a
mesh edge, the edge has only one intersection with \(\Gamma\), and all
local maps fix that edge-intersection node.  A mesh vertex cannot lie
on \(\Gamma\) by admissibility.  These cases exhaust the mesh
skeleton, so \(\Phi_h\) is injective.

Finally, \(\Gamma_h\) is compact and \(\Gamma\subset\mathbb R^3\) is
Hausdorff.  A continuous bijection from a compact space to a Hausdorff
space has a continuous inverse, proving that \(\Phi_h\) is a
homeomorphism.  The lifted functions are continuous and piecewise
\(H^1\) on the exact surface, hence belong to \(H^1(\Gamma)\).
\end{proof}

\subsection{Surface gradients and element matrices}
Let $r: \wh K\to\Gamma\cap T$ be either of the local parameterizations above, with columns $r_1,r_2$ in its Jacobian $J_r$.  Put
\[
        G_r=J_r^TJ_r=\begin{pmatrix}r_1\cdot r_1&r_1\cdot r_2\\ r_2\cdot r_1&r_2\cdot r_2\end{pmatrix}.
\]
If $\phi$ and $\psi$ are scalar functions on the parameter domain, their lifted surface gradients satisfy
\begin{equation}
\label{eq:surface-gradient-formula}
        \nabla_\Gamma \phi^l\cdot \nabla_\Gamma \psi^l
        =
        \begin{pmatrix}\phi_{\xi_1}&\phi_{\xi_2}\end{pmatrix}
        G_r^{-1}
        \begin{pmatrix}\psi_{\xi_1}\\\psi_{\xi_2}\end{pmatrix}.
\end{equation}
Thus each element contribution is
\begin{equation}
\label{eq:local-bilinear}
\begin{aligned}
        \int_{\Gamma\cap T}\phi^l\psi^l\,\d S
        &=\int_{\wh K}\phi\psi\sqrt{\det G_r}\,\d\bxi,\\
        \int_{\Gamma\cap T}\nabla_\Gamma\phi^l\cdot\nabla_\Gamma\psi^l\,\d S
        &=\int_{\wh K}
        \begin{pmatrix}\phi_{\xi_1}&\phi_{\xi_2}\end{pmatrix}
        G_r^{-1}
        \begin{pmatrix}\psi_{\xi_1}\\\psi_{\xi_2}\end{pmatrix}
        \sqrt{\det G_r}\,\d\bxi.
\end{aligned}
\end{equation}
These are the formulas used in assembly.  A high-order Gaussian rule on the reference triangle or square can be used, because the parameterized integrands are smooth on each element.

\section{Finite element space and Galerkin method}
\label{sec:fem}
Let $\Gamma_h$ be the union of all linear surface elements generated from the cut tetrahedra.  In the four-edge case, the quadrilateral is divided into the two triangles described above.  Let $\Kh$ denote this surface triangulation.  For a polynomial degree $r\ge1$, define
\begin{equation}
\label{eq:Sh}
        \Sh^r=\left\{\phi_h\in C^0(\Gamma_h):\; \phi_h|_K\in \mathbb{P}_r(K),\quad K\in\Kh\right\}.
\end{equation}
Let $\Phi_h:\Gamma_h\to\Gamma$ be the piecewise map defined by \eqref{eq:case1-implicit} in triangular cuts and by \eqref{eq:case2-lift} in quadrilateral cuts.  The lifted finite element space is
\begin{equation}
\label{eq:lifted-space}
        S_{h,r}^l=\left\{\phi_h^l:\Gamma\to\R:\; \phi_h^l(\Phi_h(X))=\phi_h(X),\quad \phi_h\in\Sh^r\right\}.
\end{equation}
Theorem \ref{thm:global-lift-homeomorphism} implies
$S_{h,r}^l\subset H^1(\Gamma)$.

The exact-surface finite element method is: find $u_h^l\in S_{h,r}^l$ such that
\begin{equation}
\label{eq:discrete}
        a(u_h^l,v_h^l)=\int_\Gamma f v_h^l\,\d S
        \qquad \forall v_h^l\in S_{h,r}^l.
\end{equation}
Because $S_{h,r}^l\subset H^1(\Gamma)$, this is a conforming Galerkin method.  Therefore, by Cea's lemma,
\begin{equation}
\label{eq:cea}
        \norm{u-u_h^l}_{H^1(\Gamma)}
        \le C\inf_{v_h^l\in S_{h,r}^l}\norm{u-v_h^l}_{H^1(\Gamma)}.
\end{equation}
The rest of the analysis is devoted to estimating the approximation term on the right-hand side.

\subsection{Stability of the lift}
\label{sec:lift-stability}
The first estimate needed for \eqref{eq:cea} is the uniform stability of the lift in $L^2$ and $H^1$.

\begin{lemma}[Uniform metric bounds]
\label{lem:metric-bounds}
For every surface element $K\in\Kh$, let $r_K:\wh K\to\Gamma\cap T$ be its exact parameterization and $X_K:\wh K\to K$ the affine parameterization of the linear element.  Then, for sufficiently small $h$,
\begin{equation}
\label{eq:metric-equivalence}
        c h^2\le \lambda_{\min}(J_{r_K}^TJ_{r_K})\le \lambda_{\max}(J_{r_K}^TJ_{r_K})\le C h^2,
\end{equation}
and the same bounds hold for $J_{X_K}^TJ_{X_K}$.
\end{lemma}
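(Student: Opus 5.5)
The plan is to factor each Jacobian as a well-conditioned $O(1)$ matrix times an $O(h)$ edge-scaled matrix. The two-sided bound \eqref{eq:metric-equivalence} then reduces to showing that a few explicit linear maps have singular values bounded above and below, uniformly in $h$ and $T$. The one preliminary fact used throughout is that the edge roots stay uniformly away from the vertices. If $x_ix_j$ is a sign-changing edge with root $x_i+t(x_j-x_i)$, then $|d|$ is $1$-Lipschitz and Definition \ref{def:admissible}(ii) gives $|d(x_i)|,|d(x_j)|\ge\eta h$. Since $|x_i-x_j|\le h$, this forces $t\in[\eta,1-\eta]$. Consequently the parameters $t_i$ in Lemma \ref{lem:triangular-bijection} and $a,b,c,d$ in Lemma \ref{lem:ratio-bijection} lie in $[\eta,1-\eta]$.

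\emph{Affine elements.} In the three-edge case, $K_h=\operatorname{conv}\{x_0+t_i(x_i-x_0)\}$ with $t_i\in[\eta,1-\eta]$, and $T$ is uniformly shape regular. So $K_h$ has diameter at most $h$ and inradius at least $ch$. Moreover, $x_0$ lies at distance $\ge ch$ from the plane of $K_h$, a fact reused below. The same argument applies to $K_{1,h},K_{2,h}$, whose vertices are the four edge roots with parameters in $[\eta,1-\eta]$. One checks that these triangles cannot degenerate: $P_{12}$ and $P_{30}$ lie on the opposite edges $x_1x_2,x_0x_3$, and are therefore separated from the line $P_{01}P_{30}$ by a distance $\ge ch$. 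Hence $J_{X_K}^TJ_{X_K}$ has eigenvalues in $[ch^2,Ch^2]$.

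\emph{Three-edge lift.} Write $Y=x_0+\tau(\mu)(X(\mu)-x_0)$ with $F(Y)=0$. Differentiating \eqref{eq:case1-implicit} gives
\[
Y_{\mu_k}=\tau\,P\,X_{\mu_k},\qquad P=I-\frac{(X-x_0)\,n_F(Y)^T}{n_F(Y)\cdot(X-x_0)}.
\]
Here $\tau\in[c,C]$ by the edge-root bound and the single-intersection property (Theorem \ref{thm:cut-configurations}(ii)). The ray from $x_0$ starts at distance $\ge\eta h$ from $\Gamma$, and by Lemma \ref{lem:one-segment} it crosses $\Gamma$ at an angle bounded below. This gives $|n_F\cdot(X-x_0)|\ge ch$, so $\|P\|\le C$ and $|Y_{\mu_k}|\le Ch$.

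For the lower bound, I use that $Pw=w-s(X-x_0)$. Hence $|Pw|\ge\operatorname{dist}(w,\operatorname{span}(X-x_0))\ge \sin\theta\,|w|$, where $\theta$ is the angle between the ray and the plane of $K_h$. The distance bound from $x_0$ to that plane gives $\sin\theta\ge c$. Applying this to $w=J_{X}\xi$ yields $|J_r\xi|\ge c|J_X\xi|\ge ch|\xi|$.

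\emph{Four-edge lift.} Here $J_r=[Y_u,Y_v]\,\partial(u,v)/\partial(p,q)$. The ratio Jacobian is $O(1)$, and its determinant is bounded below by \eqref{eq:ratio-Jacobian-first}--\eqref{eq:ratio-Jacobian-second}. This uses that $D_\pm\in(0,1]$ and $a,b,c,d\in[\eta,1-\eta]$; its smallest singular value is then bounded below as well. For $[Y_u,Y_v]$ I use \eqref{eq:case2-tangent}: $Y_u=(1-w)PM'$ and $Y_v=wPN'$ with $P$ the oblique projection along $V$. The factors $w,1-w$ lie in $[c,1-c]$ by the same $1$-Lipschitz argument applied along $MN$, and \eqref{eq:transversality} gives $\|P\|\le C$, so the upper bound $Ch$ follows.

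The lower bound is the main obstacle. One needs $V$ to make an angle bounded below with $\operatorname{span}\{M',N'\}=\operatorname{span}\{x_2-x_0,x_3-x_1\}$, together with these two opposite edges being uniformly non-parallel. The second fact is immediate from shape regularity of $T$. For the first, I would argue as follows. The point $N(v)$ lies at distance $\ge ch$ from the plane through $M(u)$ spanned by the two rails, because the distance between the skew lines $x_0x_2$ and $x_1x_3$ is comparable to $h$ for a shape-regular tetrahedron. Thus the component of $V$ normal to $\operatorname{span}\{M',N'\}$ is $\ge ch$ while $|V|\le h$. Then $|P(\alpha M'+\beta N')|\ge c|\alpha M'+\beta N'|\ge ch|(\alpha,\beta)|$, and combining with the ratio Jacobian completes the estimate.
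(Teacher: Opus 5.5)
Your proposal is correct and follows the same route as the paper. It obtains the affine bounds from shape regularity of the cut triangles, writes the triangular lift as the oblique projection $\tau P J_{X_K}$ along the ray from $x_0$, and writes the four-edge lift as the rails projected along $V$ composed with the ratio-map Jacobian.

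Where the paper only invokes shape regularity (of $\Kh$, and of the shell for the lower bounds), you give explicit reasons: the edge-root bounds $t\in[\eta,1-\eta]$, the distance from $x_0$ to the plane of $K_h$, and the skew-line distance between the rails, which also shows that the balanced condition is not needed for this lemma. One slip to fix: in $K_{1,h}$ it is $P_{12}$, not $P_{30}$, that is separated from the line $P_{01}P_{30}$.
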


\begin{proof}
We first prove the estimate for the affine map $X_K$.  Since the induced surface triangulation $\Kh$ is uniformly shape regular and every element has diameter comparable to $h$, the two singular values of the $3\times2$ matrix $J_{X_K}$ are bounded between $c h$ and $C h$.  Equivalently,
\[
        c h^2 |\zeta|^2
        \le \zeta^T J_{X_K}^T J_{X_K}\zeta
        = |J_{X_K}\zeta|^2
        \le C h^2 |\zeta|^2,
        \qquad \zeta\in\R^2.
\]
This proves the metric bounds for $X_K$.

It remains to compare the exact parameterization $r_K$ with the affine one.  We give the details for the two local constructions.

In the triangular case write, for $X=X_K(\bxi)$,
\[
        V(\bxi)=\frac{X(\bxi)-x_0}{h},
        \qquad
        Y(\bxi)=x_0+k(\bxi)V(\bxi),
\]
where $k=O(h)$.  The vector $V$ has a normal component bounded away from zero at $Y$.  Indeed, $d(Y)=0$ and Taylor expansion at $Y$ gives
\[
        d(x_0)=d(Y-kV)=-k\,\nn(Y)\cdot V+O(h^2),
\]
while $|d(x_0)|\ge\eta h$ and $k\simeq h$.  Hence $|\nn(Y)\cdot V|\ge c$.  Differentiating the equation $F(x_0+kV)=0$ gives
\[
        k_i=-k\frac{\nabla F(Y)\cdot V_i}{\nabla F(Y)\cdot V},
        \qquad i=1,2,
\]
where $V_i=\partial_{\xi_i}V=O(1)$.  Therefore
\[
        Y_i=k_iV+kV_i=O(h),
        \qquad i=1,2.
\]
This gives the upper bound $\lambda_{\max}(J_{r_K}^TJ_{r_K})\le C h^2$.  For the lower bound, observe that $Y_i$ is obtained from $kV_i$ by projection onto $T_Y\Gamma$ along the transverse direction $V$.  The projection operator is uniformly bounded above and below on the two-dimensional space spanned by $V_1,V_2$, because $V$ is transverse to $T_Y\Gamma$ and the triangle $K$ is shape regular.  Thus for every $\zeta\in\R^2$,
\[
        |J_{r_K}\zeta|\ge c h |\zeta|,
\]
which proves the desired lower metric bound in the triangular case.

In the quadrilateral case, use the notation of Section \ref{subsec:case2}.  Put $e_M=(x_2-x_0)/h$, $e_N=(x_3-x_1)/h$, and $v_r=(N-M)/h$.  Shape regularity of the shell tetrahedra gives $|e_M|+|e_N|+|v_r|\le C$ and prevents the three directions from becoming linearly dependent in a way that would collapse the prism.  The transversality estimate \eqref{eq:transversality} is equivalent to $|\nabla F(Y)\cdot v_r|\ge c$.  Formula \eqref{eq:case2-tangent} can be written as
\[
\begin{aligned}
        Y_u&=h(1-w)\left[e_M-\frac{\nabla F(Y)\cdot e_M}{\nabla F(Y)\cdot v_r}v_r\right],\\
        Y_v&=h w\left[e_N-\frac{\nabla F(Y)\cdot e_N}{\nabla F(Y)\cdot v_r}v_r\right].
\end{aligned}
\]
Hence $Y_u,Y_v=O(h)$.  The bracketed vectors are the projections of the two same-sign rail directions onto the tangent plane along the transverse direction $v_r$.  Uniform shape regularity of the tubular shell and of the original surface triangle implies that these two projected rail directions have a uniformly bounded condition number.  Thus the $2\times2$ metric for the ruled variables $(u,v)$ has eigenvalues comparable to $h^2$.

Finally, on each of the two triangles used to represent the generally
non-coplanar spatial quadrilateral, the ratio map
$(\xi_1,\xi_2)\mapsto(u,v)$ is a smooth map with Jacobian and inverse
Jacobian uniformly bounded.  This follows from the fact that the
denominators $\alpha+\gamma$ and $\beta+\delta$ in
\eqref{eq:ratio-map} are bounded below and from the shape regularity of
the cut configuration.  Composing the ruled metric with this uniformly
regular ratio map preserves the $h^2$ eigenvalue bounds.  This proves
\eqref{eq:metric-equivalence}.
\end{proof}

\begin{lemma}[Lift stability]
\label{lem:lift-stability}
For every $\phi_h\in\Sh^r$,
\begin{equation}
\label{eq:L2H1-stability}
        \norm{\phi_h^l}_{L^2(\Gamma)}\le C\norm{\phi_h}_{L^2(\Gamma_h)},
        \qquad
        \norm{\phi_h^l}_{H^1(\Gamma)}\le C\norm{\phi_h}_{H^1(\Gamma_h)}.
\end{equation}
The same estimates hold in the reverse direction for functions on $\Gamma$ pulled back to $\Gamma_h$.
\end{lemma}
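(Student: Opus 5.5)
The plan is to work element by element and reduce everything to the reference domain, using the metric equivalence of Lemma \ref{lem:metric-bounds}. Fix $K\in\Kh$ with affine parameterization $X_K:\wh K\to K$ and exact parameterization $r_K:\wh K\to\Gamma\cap T$. In the triangular case $r_K=\Phi_T\circ X_K$ by construction. In the four-edge case $r_K=Y\circ\mathcal R_T\circ X_K$ on each of the two triangles, which is again $\Phi_T\circ X_K$. Set $\hat\phi=\phi_h\circ X_K=\phi_h^l\circ r_K$. Both norms are then written in the form \eqref{eq:local-bilinear}, once with $G_{r_K}$ and once with $G_{X_K}=J_{X_K}^TJ_{X_K}$. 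Since $\Gamma_h$ is a union of linear pieces, the $L^2(\Gamma_h)$ and $H^1(\Gamma_h)$ norms are understood elementwise (broken tangential gradient on each planar $K$). By Theorem \ref{thm:global-lift-homeomorphism} the images $r_K(\wh K)$ tile $\Gamma$ up to null sets, so the global norms are sums of these local integrals.

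\emph{$L^2$ estimate.} From \eqref{eq:metric-equivalence}, $\sqrt{\det G_{r_K}}\le\lambda_{\max}(G_{r_K})\le Ch^2$ and $\sqrt{\det G_{X_K}}\ge\lambda_{\min}(G_{X_K})\ge ch^2$. Hence $\sqrt{\det G_{r_K}}\le C\sqrt{\det G_{X_K}}$ pointwise on $\wh K$, which gives
\[
\int_{\Gamma\cap T}|\phi_h^l|^2\,\d S=\int_{\wh K}|\hat\phi|^2\sqrt{\det G_{r_K}}\,\d\bxi\le C\int_K|\phi_h|^2\,\d S .
\]
\emph{Gradient estimate.} Formula \eqref{eq:surface-gradient-formula} gives $|\nabla_\Gamma\phi_h^l|^2=\nabla_{\bxi}\hat\phi^TG_{r_K}^{-1}\nabla_{\bxi}\hat\phi\le\lambda_{\min}(G_{r_K})^{-1}|\nabla_{\bxi}\hat\phi|^2$. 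Similarly $|\nabla_K\phi_h|^2\ge\lambda_{\max}(G_{X_K})^{-1}|\nabla_{\bxi}\hat\phi|^2$. Both eigenvalues are comparable to $h^2$, so $|\nabla_\Gamma\phi_h^l|^2\sqrt{\det G_{r_K}}\le C|\nabla_K\phi_h|^2\sqrt{\det G_{X_K}}$ pointwise. Integrating over $\wh K$ and summing over $K$ gives the $H^1$ bound. The reverse estimates follow from the same two-sided inequalities with the roles of $r_K$ and $X_K$ exchanged, because \eqref{eq:metric-equivalence} supplies both upper and lower bounds for both metrics. The argument never uses that $\phi_h$ is polynomial: it applies to any function in $H^1(\Gamma)$ pulled back by $\Phi_h$.

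The main obstacle is not this computation but justifying that Lemma \ref{lem:metric-bounds} is used with the correct parameterization on every element. In the four-edge case, $r_K$ must be the composition of $Y$ with the ratio map restricted to each triangle, and the constants must stay uniform up to the artificial diagonal. I would rely on the Jacobian bounds for $\mathcal R_T$ stated at the end of the proof of Lemma \ref{lem:metric-bounds}. One also needs $\hat\phi\in H^1(\wh K)$ and continuity across element interfaces to conclude $\phi_h^l\in H^1(\Gamma)$ globally, so that summing the local bounds controls the full norm. This follows from Theorem \ref{thm:global-lift-homeomorphism}, and in the reverse direction from the fact that $r_K$ is a $C^1$ diffeomorphism onto its patch.
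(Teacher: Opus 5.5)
Your proposal is correct and follows the paper's approach. The paper's proof only says that the lemma follows directly from the metric bounds of Lemma~\ref{lem:metric-bounds}, and you supply the omitted elementwise computation: you compare $\sqrt{\det G_{r_K}}$ with $\sqrt{\det G_{X_K}}$ and $G_{r_K}^{-1}$ with $G_{X_K}^{-1}$ through the two-sided $h^2$ eigenvalue bounds, then sum over $K\in\mathcal{K}_h$, and you rightly note that this never uses polynomiality of $\phi_h$, which is what covers the reverse-direction claim for general pulled-back functions.
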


\begin{proof}
This comes directly from the above bounds in \ref{lem:metric-bounds}.
\end{proof}

\subsection{High-order derivative estimates}
\label{sec:high-order}
This section proves the derivative estimates that control the broken
$H^m$ norm of the pullback from $\Gamma$ to $\Gamma_h$.  The estimates
are local; all constants are independent of the cut position.

\begin{lemma}[Integer inequality]
\label{lem:integer}
Let $q\ge2$ and let $n_1,\ldots,n_q$ be positive integers.  Then
\begin{equation}
\label{eq:integer}
        \sum_{j=1}^q \left\lfloor\frac{n_j-1}{2}\right\rfloor
        \le
        \left\lfloor\frac{\sum_{j=1}^q n_j}{2}\right\rfloor-1.
\end{equation}
Equivalently,
\begin{equation}
\label{eq:integer-scaled}
        \sum_{j=1}^q \left(\left\lfloor\frac{n_j}{2}\right\rfloor+1\right)
        \ge
        \left\lfloor\frac{\sum_{j=1}^q n_j}{2}\right\rfloor+1.
\end{equation}
\end{lemma}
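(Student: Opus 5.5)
We first check that the two forms are equivalent, and then prove the scaled form by pairwise merging plus induction on $q$.

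\emph{Equivalence.} For every positive integer $n$ we have $\lfloor (n-1)/2\rfloor=\lceil n/2\rceil-1$. Moreover $\lfloor n/2\rfloor+1$ equals $\lceil n/2\rceil+1$ when $n$ is even and $\lceil n/2\rceil$ when $n$ is odd. We will not rely on these identities blindly. Instead we state the working form
\[
\sum_{j=1}^q\Bigl\lfloor\frac{n_j-1}{2}\Bigr\rfloor+1\le\Bigl\lfloor\frac{\sum_j n_j}{2}\Bigr\rfloor,
\]
which is \eqref{eq:integer} rewritten, and verify it directly.

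\emph{Base case $q=2$.} Use $\lfloor (n-1)/2\rfloor\le (n-1)/2$. The left side of \eqref{eq:integer} is then an integer bounded by $(n_1+n_2)/2-1$. Since the right side is the largest integer not exceeding $(n_1+n_2)/2-1$, the inequality holds.

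\emph{Induction step.} The same argument works for any $q\ge2$: the left side is an integer at most
\[
\sum_j\frac{n_j-1}{2}=\frac{\sum_j n_j}{2}-\frac q2\le\frac{\sum_j n_j}{2}-1.
\]
Taking floors gives \eqref{eq:integer}. So no real induction is needed, since integrality does all the work.

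\emph{Scaled form.} For \eqref{eq:integer-scaled} I would argue directly rather than through the equivalence. Each term satisfies $\lfloor n_j/2\rfloor+1\ge (n_j+1)/2$, with equality exactly when $n_j$ is odd. Hence
\[
\sum_j\Bigl(\Bigl\lfloor\frac{n_j}{2}\Bigr\rfloor+1\Bigr)\ge\frac{\sum_j n_j}{2}+\frac q2\ge\frac{\sum_j n_j}{2}+1\ge\Bigl\lfloor\frac{\sum_j n_j}{2}\Bigr\rfloor+1,
\]
using $q\ge2$ in the middle step.

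The only delicate point is the word ``Equivalently.'' Termwise the two statements are not literally the same. When $n$ is even, $\lfloor (n-1)/2\rfloor=n/2-1$, whereas $\lfloor n/2\rfloor+1=n/2+1$, so the terms differ by the sign and shift pattern of the derivative-counting used later. I would therefore present both inequalities as consequences of the same integrality and $q\ge2$ argument, rather than derive one from the other. The main step is simply recognizing that the integrality of the left side (or the termwise lower bound $(n_j+1)/2$) absorbs the floor on the right.
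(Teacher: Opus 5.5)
Your argument is correct, but it takes a somewhat different route from the paper's. The paper checks $q=2$ by the four parity cases. For $q\ge3$ it uses the real-number chain $\sum_j\lfloor (n_j-1)/2\rfloor\le \frac12 N-\frac q2\le\frac12 N-\frac32\le\lfloor N/2\rfloor-1$ with $N=\sum_j n_j$. Its last step uses only $\lfloor N/2\rfloor\ge (N-1)/2$, which wastes a half; that is exactly why $q=2$ has to be handled separately.

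You instead use the fact that the left-hand side of \eqref{eq:integer} is an integer, so the bound $N/2-1$ can be floored directly. This covers all $q\ge2$ at once with no parity split, which also makes your ``base case''/``induction step'' labels unnecessary. For \eqref{eq:integer-scaled}, the paper only says it is the same inequality after rewriting the floors. You prove it directly from $\lfloor n_j/2\rfloor+1\ge (n_j+1)/2$, which is fine.

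Your doubt about ``Equivalently'' is justified, but the cleaner diagnosis is this. For every positive integer $n$ one has exactly $\lfloor n/2\rfloor+1=n-\lfloor (n-1)/2\rfloor$, so \eqref{eq:integer} is equivalent to
\[
\sum_{j=1}^q\Bigl(\Bigl\lfloor\frac{n_j}{2}\Bigr\rfloor+1\Bigr)\ge N-\Bigl\lfloor\frac N2\Bigr\rfloor+1=\Bigl\lceil\frac N2\Bigr\rceil+1 .
\]
The discrepancy is therefore in the right-hand side, not termwise. This form implies \eqref{eq:integer-scaled} in one line, but it is strictly stronger when $N$ is odd. For example, with $q=1$ and $n_1$ odd, \eqref{eq:integer-scaled} holds while \eqref{eq:integer} fails.

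The stronger version, with $\lceil n/2\rceil+1=\lfloor (n+1)/2\rfloor+1$ on the right, is what the later normal-component estimates for the $g$-terms actually require. So it is worth recording this relation rather than proving \eqref{eq:integer-scaled} in isolation.
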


\begin{proof}
For $q=2$ the statement is checked by the four parity cases.  For $q\ge3$,
\[
        \sum_{j=1}^q \left\lfloor\frac{n_j-1}{2}\right\rfloor
        \le \frac12\sum_{j=1}^q n_j-\frac q2
        \le \frac12\sum_{j=1}^q n_j-\frac32
        \le \left\lfloor\frac{\sum_j n_j}{2}\right\rfloor-1.
\]
The scaled form is the same inequality after rewriting the floors.
\end{proof}

\subsubsection{Triangular patches}
For the triangular cut, the following estimate is obtained by differentiating the vertex-projection constraint \eqref{eq:case1-implicit}.  It is the surface analogue of the two-dimensional curve estimate: the derivative of the signed distance along a chord is $O(h)$, because the chord endpoints lie on $\Gamma$ and the second derivative of $d$ is uniformly bounded.

\begin{theorem}[Derivative bounds for triangular cuts]
\label{thm:case1-derivatives}
Let $r_K:\wh K\to\Gamma\cap T$ be the triangular-patch lift, and let $\bs=(s_1,s_2)$ be any smooth local coordinate system on $\Gamma$ with uniformly bounded derivatives.  Write
\[
        r_K(\bxi)=g(\bs(\bxi)).
\]
For every multi-index $\balpha$ with $|\balpha|=n\ge1$,
\begin{equation}
\label{eq:case1-scaled-bound}
        |D_{\bxi}^{\balpha}\bs|\le C h^{\lfloor n/2\rfloor+1}.
\end{equation}
Equivalently, with derivatives taken with respect to physical coordinates on $K$,
\begin{equation}
\label{eq:case1-physical-bound}
        |D_X^{\balpha}\bs|\le C h^{-\lfloor (n-1)/2\rfloor}.
\end{equation}
\end{theorem}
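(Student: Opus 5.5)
We reduce the problem to a scalar equation for the ray length and then argue by induction on the order of differentiation. Write the lift as $Y(\bxi)=x_0+k(\bxi)V(\bxi)$ with $V=(X_K(\bxi)-x_0)/h$, exactly as in the proof of Lemma \ref{lem:metric-bounds}. Since $X_K$ is affine with $|D_\bxi X_K|\le Ch$, every derivative of $V$ of order $\ge2$ vanishes and $|D_\bxi V|\le C$. The map $\bs(\bxi)$ is obtained by composing $Y$ with the smooth chart inverse $g^{-1}$, extended to a tubular neighbourhood, for example through $\pi_\Gamma$. By Faà di Bruno, $D^\balpha\bs$ is a sum of products $D^{m}(g^{-1}\circ\pi_\Gamma)\,D^{\bbeta_1}Y\cdots D^{\bbeta_m}Y$ with $\sum|\bbeta_j|=n$. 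The bound \eqref{eq:case1-scaled-bound} therefore follows from the corresponding bound
\[
|D^{\bbeta}_\bxi Y|\le C h^{\lfloor |\bbeta|/2\rfloor+1}
\]
for each factor, combined with Lemma \ref{lem:integer} in its scaled form \eqref{eq:integer-scaled}. That inequality says precisely that a product of $q\ge2$ factors has at least the exponent $\lfloor n/2\rfloor+1$, while the single-factor term ($q=1$) is immediate. The physical-coordinate bound \eqref{eq:case1-physical-bound} then follows by the chain rule with $|D_X\bxi|\le Ch^{-1}$, since $h^{\lfloor n/2\rfloor+1-n}=h^{-\lfloor (n-1)/2\rfloor}$.

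To estimate $D^\bbeta Y=\sum\binom{\bbeta}{\bgamma}D^{\bgamma}k\,D^{\bbeta-\bgamma}V$, only $|\bbeta-\bgamma|\le1$ contributes. It therefore suffices to prove $|D^\bbeta k|\le Ch^{\lfloor|\bbeta|/2\rfloor+1}$, the case $|\bbeta|=0$ being $k\simeq h$. We differentiate the identity $F(x_0+kV)=0$ $n$ times. The term containing $D^\balpha k$ is $\nabla F(Y)\cdot V\,D^\balpha k$, and its coefficient is bounded below by the transversality $|\nn(Y)\cdot V|\ge c$ established in Lemma \ref{lem:metric-bounds}. The remaining terms are $D^mF(Y)[D^{\bbeta_1}Y,\dots,D^{\bbeta_m}Y]$ with $m\ge2$, together with the terms $\nabla F\cdot D^{\bgamma}k\,D V$ for $|\bgamma|=n-1$.

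\textbf{Main obstacle.} A naive induction gives only $D^{\bbeta}k=O(h)$ uniformly in the order; the parity gain must come from geometry. For the term $\nabla F(Y)\cdot D^{\bgamma}k\,V_i$, I would use $k\,\nabla F(Y)\cdot V_i=-k_i\nabla F\cdot V$, i.e.\ the identity for $k_i$ from Lemma \ref{lem:metric-bounds}. This replaces the problem by showing that $\nabla F(Y)\cdot V_i$ is itself small, namely $O(h)$. The reason is that $hV_i$ is a chord direction of $K_h$, whose vertices $B_j$ lie on $\Gamma$, so its normal component is $O(h)$ by the same second-order Taylor argument as in Lemma \ref{lem:one-segment}. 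This is the surface analogue of the remark preceding the theorem. It is cleaner to work throughout with $d$ rather than $F$ and to write $Y_i$ via $\pi_\Gamma$ so that tangential and normal parts separate.

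I would then run a two-step induction: odd orders gain one factor of $h$ from this normal-component cancellation, and even orders inherit it. Combined with Lemma \ref{lem:integer} for the multilinear terms with $m\ge2$, this yields $|D^\balpha k|\le Ch^{\lfloor n/2\rfloor+1}$. Bookkeeping the parity of this cancellation at each order is where I expect the argument to be delicate, and I would first check it explicitly for $n=2,3$ before setting up the general induction.
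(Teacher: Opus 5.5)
\paragraph{There is a genuine gap in the reduction step.} Your framework matches the paper's argument in scalar form: the equation $F(x_0+kV)=0$, the normal gain $\nn(Y)\cdot D_iV=O(h)$ from the chord argument, and Fa\`a di Bruno for $\bs=g^{-1}\circ\pi_\Gamma(Y)$. The step that fails is ``it therefore suffices to prove $|D^{\bbeta}k|\le Ch^{\lfloor|\bbeta|/2\rfloor+1}$.'' Write $D^{\bbeta}Y=D^{\bbeta}k\,V+\sum_i\beta_iD^{\bbeta-e_i}k\,D_iV$. The second term has full size, because $D_iV=O(1)$ is essentially tangential and gets no normal gain here. Under your bound on $k$, at even order $n$ this term is only $O(h^{\lfloor(n-1)/2\rfloor+1})=O(h^{n/2})$, one power short. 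Already for $n=2$, $D_iD_jY$ contains $k_i\,D_jV$, and $k_i=O(h)$ would leave $D^2_{\bxi}\bs=O(h)$ instead of $O(h^2)$. So the conclusion you announce, $|D^{\balpha}k|\le Ch^{\lfloor n/2\rfloor+1}$, does not prove the theorem. The parity bookkeeping you postpone is the actual content of the proof, and the target you set for it is too weak.

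\paragraph{What is needed.} You need the paper's strengthened induction claim $|D^{\balpha}k|\le Ch^{\lfloor(n+1)/2\rfloor+1}$, i.e.\ $Dk=O(h^2)$, $D^2k=O(h^2)$, $D^3k=O(h^3)$, and so on. It must be carried jointly with $|D^{\balpha}Y|\le Ch^{\lfloor n/2\rfloor+1}$. Your remark that ``odd orders gain one factor of $h$'' points in this direction, but two terms then have to be checked.
\begin{itemize}
\item The term $\nabla F(Y)\cdot D_iV\,D^{\balpha-e_i}k$ is $O(h)\cdot O(h^{\lfloor n/2\rfloor+1})$ by the normal gain, which suffices.
\item The terms $D^mF(Y)[D^{\bbeta_1}Y,\dots,D^{\bbeta_m}Y]$ with $m\ge2$ are not handled by Lemma \ref{lem:integer} as stated. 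That lemma only yields the exponent $\lfloor n/2\rfloor+1$, which is one short at odd $n$. You need the sharper count $\sum_j(\lfloor n_j/2\rfloor+1)\ge(n+m)/2\ge n/2+1$, hence $\ge\lfloor(n+1)/2\rfloor+1$ by integrality.
\end{itemize}
With these two points, dividing by $|\nabla F(Y)\cdot V|\ge c$ closes the induction. Your scalar formulation is then a legitimate, slightly leaner variant of the paper's $3\times3$ system with $A=(g_{s_1},g_{s_2},-V)$. Contracting with the normal vector $\nabla F(Y)$ does automatically the normal-component bookkeeping that the paper does through the third component of $A^{-1}R_{\balpha}$.
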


\begin{proof}
We give the proof because the same mechanism is used later for the ruled quadrilateral lift.  Let
\[
        V(\bxi)=\frac{X(\bxi)-x_0}{h}.
\]
Since $Y=r_K(\bxi)$ lies on the segment $x_0X(\bxi)$, there is a scalar $k(\bxi)$ such that
\begin{equation}
\label{eq:tri-offset}
        g(\bs(\bxi))-x_0-k(\bxi)V(\bxi)=0.
\end{equation}
The scalar $k$ is of size $h$.  Indeed, the root $Y$ lies in a tetrahedron of diameter $O(h)$, while $V=O(1)$ and $V$ has a normal component bounded below.  More precisely, Taylor expansion of $d$ at $Y$ gives
\[
        d(x_0)=d(Y-kV)=-k\,\nn(Y)\cdot V+O(h^2),
\]
so $|\nn(Y)\cdot V|\ge c>0$ and $k\simeq h$.

We shall also use the following elementary geometric estimate.  Since the three vertices of the linear triangle $K$ lie on $\Gamma$ and $D^2d$ is bounded, the function $d(X(\bxi))$ satisfies
\begin{equation}
\label{eq:d-on-linear-triangle}
        \|d(X(\cdot))\|_{W^{1,\infty}(\wh K)}\le C h^2.
\end{equation}
This follows by applying the interpolation remainder estimate on the reference triangle to $d\circ X$; its second derivatives are $O(h^2)$ because $D_{\bxi}X=O(h)$.  In particular,
\begin{equation}
\label{eq:normal-gain-triangle}
        \left|\nn(Y)\cdot D_i V\right|
        =h^{-1}\left|\nn(Y)\cdot D_iX\right|
        \le C h,
        \qquad i=1,2,
\end{equation}
where the replacement of $\nn(X)$ by $\nn(Y)$ only changes the estimate by $O(h)$ because $|X-Y|=O(h^2)$ and $D_iX=O(h)$.

Set
\[
        A(\bxi)=\left(g_{s_1}(\bs),g_{s_2}(\bs),-V(\bxi)\right).
\]
The first two columns span $T_Y\Gamma$, and the third column has a normal component bounded away from zero.  Hence $A^{-1}$ is uniformly bounded.  Moreover, if
\[
        A\begin{pmatrix}a_1\\a_2\\a_3\end{pmatrix}=R,
\]
then the third component satisfies
\begin{equation}
\label{eq:Ainv-third-tri}
        |a_3|\le C|\nn(Y)\cdot R|+Ch|R|,
\end{equation}
because the tangential components of $V$ and the variation of the normal over one element are uniformly bounded, while the normal component of $V$ is bounded below.

We prove the stronger induction statement
\begin{equation}
\label{eq:tri-induction-claim}
        |D_{\bxi}^{\balpha}\bs|\le C h^{\lfloor n/2\rfloor+1},
        \qquad
        |D_{\bxi}^{\balpha}k|\le C h^{\lfloor (n+1)/2\rfloor+1},
        \qquad n=|\balpha|\ge1.
\end{equation}
For $n=1$, differentiating \eqref{eq:tri-offset} gives
\[
        A\begin{pmatrix}D_i s_1\\D_i s_2\\D_i k\end{pmatrix}=kD_iV.
\]
The right-hand side has size $O(h)$.  Its normal component is $O(h^2)$ by \eqref{eq:normal-gain-triangle}.  The boundedness of $A^{-1}$ and \eqref{eq:Ainv-third-tri} therefore give
\[
        D_i\bs=O(h),
        \qquad
        D_i k=O(h^2),
\]
which is the base case.

Assume now that \eqref{eq:tri-induction-claim} holds for all orders smaller than $n$, and let $|\balpha|=n\ge2$.  Apply $D_{\bxi}^{\balpha}$ to \eqref{eq:tri-offset}.  Since $V$ is affine in $\bxi$, all derivatives $D_{\bxi}^{\bgamma}V$ with $|\bgamma|\ge2$ vanish.  Isolating the highest-order derivatives gives
\begin{equation}
\label{eq:tri-linear-system}
        A\begin{pmatrix}D^{\balpha}s_1\\D^{\balpha}s_2\\D^{\balpha}k\end{pmatrix}=R_{\balpha},
\end{equation}
where $R_{\balpha}$ is a finite sum of the following lower-order terms:
\begin{equation}
\label{eq:tri-Ralpha}
\begin{aligned}
        R_{\balpha}=&-
        \sum_{m\ge2}\sum_{\bbeta_1+\cdots+\bbeta_m=\balpha}
        C_{\bbeta_1\cdots\bbeta_m}
        D_{\bs}^{m}g(\bs)
        \big[D^{\bbeta_1}\bs,\ldots,D^{\bbeta_m}\bs\big]\\
        &+\sum_{i=1}^2 \alpha_i\,D^{\balpha-e_i}k\,D_iV.
\end{aligned}
\end{equation}
In the first sum all $\bbeta_j$ are nonzero and have order strictly less than $n$.

The size estimate follows directly from the induction hypothesis.  For the $g$-terms, Lemma \ref{lem:integer} gives
\[
        \prod_{j=1}^m |D^{\bbeta_j}\bs|
        \le C h^{\sum_j(\lfloor |\bbeta_j|/2\rfloor+1)}
        \le C h^{\lfloor n/2\rfloor+1}.
\]
For the terms involving $V$, $D_iV=O(1)$ and
\[
        |D^{\balpha-e_i}k|
        \le C h^{\lfloor n/2\rfloor+1}.
\]
Thus
\begin{equation}
\label{eq:tri-R-size}
        |R_{\balpha}|\le C h^{\lfloor n/2\rfloor+1}.
\end{equation}
We also need a normal estimate.  For the $g$-terms, the same induction hypothesis and the parity estimate imply
\[
        \left|\nn(Y)\cdot D_{\bs}^{m}g(\bs)
        \big[D^{\bbeta_1}\bs,\ldots,D^{\bbeta_m}\bs\big]\right|
        \le C h^{\lfloor (n+1)/2\rfloor+1}.
\]
For the $V$-terms, \eqref{eq:normal-gain-triangle} gives one additional factor $h$:
\[
        \left|\nn(Y)\cdot D^{\balpha-e_i}k\,D_iV\right|
        \le C h^{\lfloor n/2\rfloor+2}
        \le C h^{\lfloor (n+1)/2\rfloor+1}.
\]
Consequently,
\begin{equation}
\label{eq:tri-R-normal}
        |\nn(Y)\cdot R_{\balpha}|
        \le C h^{\lfloor (n+1)/2\rfloor+1}.
\end{equation}
Combining \eqref{eq:tri-linear-system}, \eqref{eq:tri-R-size}, and \eqref{eq:tri-R-normal} with the mapping properties of $A^{-1}$ proves \eqref{eq:tri-induction-claim}.  In particular, \eqref{eq:case1-scaled-bound} follows.

Finally, $X_K:\wh K\to K$ is affine with $D_{\bxi}X_K=O(h)$ and inverse derivative $D_X\bxi=O(h^{-1})$.  Therefore $D_X^{\balpha}$ scales like $h^{-n}D_{\bxi}^{\balpha}$, and \eqref{eq:case1-physical-bound} follows from \eqref{eq:case1-scaled-bound}.
\end{proof}

\subsubsection{Quadrilateral patches: high-order estimate for the ruled lift}
\label{subsec:case2-proof}
We now prove the corresponding estimate for the four-edge cut.  This is the main new technical result.

Fix one of the two linear triangles $K_h\subset\Gamma_h\cap T$ in a four-edge cut, and let
\[
        X=X(\bxi):\wh K\to K_h
\]
be its affine map.  Let $u(\bxi),v(\bxi)$ be the ratio variables \eqref{eq:ratio-map}, and define $M,N$ as in \eqref{eq:rails}.  Let
\[
        q(\bxi)=\frac{N(v(\bxi))-M(u(\bxi))}{h}.
\]
The exact lift can be written in the form
\begin{equation}
\label{eq:case2-offset}
        Y(\bxi)=X(\bxi)+\ell(\bxi)q(\bxi),
\end{equation}
where $\ell=O(h^2)$.  Indeed, $X$ lies on the segment $M(u)N(v)$ by \eqref{eq:X-MN}, and $Y$ is the exact intersection of the same segment with $\Gamma$; the signed-distance error of the linear triangle is $O(h^2)$ since its vertices lie on $\Gamma$ and $D^2d$ is bounded.  Transversality of the segment then gives $\ell=O(h^2)$.

The balanced tubular offset condition gives the following normal-gain property for the ruled directions.

\begin{lemma}[Normal gain for the ruled direction]
\label{lem:normal-gain-q}
Let $q$ be defined above.  For every multi-index $\balpha$ with $m=|\balpha|\ge1$, the tubular offset shell mesh satisfies
\begin{equation}
\label{eq:q-derivative-bound}
        |D_{\bxi}^{\balpha}q|\le C h^{m-1},
        \qquad
        |\nn(Y)\cdot D_{\bxi}^{\balpha}q|\le C h^{m}.
\end{equation}
In particular, the first-order estimates are $|D_{\bxi}q|\le C$ and $|\nn(Y)\cdot D_{\bxi}q|\le Ch$.  Moreover, $|\nn(Y)\cdot q|\ge c>0$.
\end{lemma}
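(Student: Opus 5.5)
The plan is to compute $q$ and its derivatives explicitly. By \eqref{eq:rails},
\[
        q(\bxi)=e_N\,v(\bxi)-e_M\,u(\bxi)+\frac{x_1-x_0}{h},
        \qquad e_M=\frac{x_2-x_0}{h},\quad e_N=\frac{x_3-x_1}{h}.
\]
Hence for $|\balpha|=m\ge1$,
\[
        D_{\bxi}^{\balpha}q=e_N\,D^{\balpha}_{\bxi}v-e_M\,D^{\balpha}_{\bxi}u .
\]
Shape regularity gives $|e_M|,|e_N|\le C$. So both estimates in \eqref{eq:q-derivative-bound} reduce to two facts. The first is the scalar bound $|D^{\balpha}_{\bxi}u|+|D^{\balpha}_{\bxi}v|\le Ch^{m-1}$. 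The second is the normal bound $|\nn(Y)\cdot e_M|+|\nn(Y)\cdot e_N|\le Ch$. Multiplying these gives the $h^m$ normal estimate.

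\textbf{Scalar bound.} The barycentric coordinates $\alpha,\beta,\gamma,\delta$ are affine on $K_h$, hence affine in $\bxi$. Since $D_{\bxi}X=O(h)$ and barycentric gradients are $O(h^{-1})$, we get $D_{\bxi}\gamma=O(1)$, and all second derivatives vanish. Write $u=\gamma\cdot D_+^{-1}$ with $D_+=\alpha+\gamma=1-w_h$.

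The key point is that under the balanced condition $D_+$ is nearly constant. On the edge $x_ix_j$ with $d(x_i)=-d(x_j)+O(h^2)$ and $|d(x_i)|\ge\eta h$, the function $d$ is linear up to $O(h^2)$. Its root parameter is therefore $1/2+O(h)$, so $a,b,c,d=\tfrac12+O(h)$ in the notation of Lemma \ref{lem:ratio-bijection}. The vertex values of $D_+$ on each triangle $K_{1,h},K_{2,h}$ are among $1-a,1-b,1-d$ (respectively $b,1-c,1-d$), so they all equal $\tfrac12+O(h)$. Since $D_+$ is affine, $D_{\bxi}D_+=O(h)$ and $D_+\ge c$.

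Faà di Bruno then gives $D^k_{\bxi}(D_+^{-1})=O(h^k)$, because only first derivatives of $D_+$ are nonzero and each contributes a factor $h$. By Leibniz,
\[
        D^{\balpha}u=\gamma\,D^{\balpha}(D_+^{-1})+\sum_i\alpha_i\,D_i\gamma\,D^{\balpha-e_i}(D_+^{-1})=O(h^m)+O(h^{m-1}).
\]
The same argument with $D_-=\beta+\delta=w_h$ handles $v$.

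\textbf{Normal bound.} The rails join same-sign vertices, and by \eqref{eq:balanced-distance} $d(x_2)-d(x_0)=O(h^2)$. Taylor expansion gives
\[
        d(x_2)-d(x_0)=\nn(x_0)\cdot(x_2-x_0)+O(h^2),
\]
so $\nn(x_0)\cdot e_M=O(h)$. Since $|Y-x_0|=O(h)$, we have $|\nn(Y)-\nn(x_0)|=O(h)$, and therefore $\nn(Y)\cdot e_M=O(h)$. The same holds for $e_N$.

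\textbf{Lower bound.} The points $M(u)$ and $N(v)$ are convex combinations of same-sign vertices. With $|d|\ge\eta h$ and the balanced condition, this gives $d(M)\ge \eta h-O(h^2)$ and $d(N)\le-\eta h+O(h^2)$ (or the reverse signs). Taylor expansion at $Y$, which lies within $O(h)$ of both points, gives
\[
        \nn(Y)\cdot(N-M)=d(N)-d(M)+O(h^2),
\]
hence $|\nn(Y)\cdot q|\ge 2\eta-Ch\ge c$.

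\textbf{Main obstacle.} The crux is the scalar bound $D^{\balpha}u=O(h^{m-1})$ rather than $O(1)$. It depends entirely on showing $D_{\bxi}D_\pm=O(h)$, which in turn needs the edge roots to be $\tfrac12+O(h)$ uniformly. That step is exactly where the balanced condition \eqref{eq:balanced-distance}, together with the one-root property from Lemma \ref{lem:one-segment}, enters. I would first verify carefully that the root of $d$ on each edge is $\tfrac12+O(h)$ via the linear-plus-$O(h^2)$ expansion, with $|d|\ge\eta h$ providing the division.
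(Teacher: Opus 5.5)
Your proposal is correct and follows the paper's proof essentially step for step. The shared steps are: the identity $D_{\bxi}^{\balpha}q=e_N D_{\bxi}^{\balpha}v-e_M D_{\bxi}^{\balpha}u$; nearly constant affine denominators $\alpha+\gamma$ and $\beta+\delta$, which give $D_{\bxi}^{\balpha}u, D_{\bxi}^{\balpha}v=O(h^{m-1})$ via Fa\`a di Bruno and Leibniz; the $O(h)$ normal component of the rails from the balanced condition; and the signed-distance argument for $|\nn(Y)\cdot q|\ge c$. The differences are cosmetic: you use the two-sided form of \eqref{eq:balanced-distance} to pin the edge roots at $\tfrac12+O(h)$, whereas the paper allows general layer constants $\rho_\pm$, and the vertex values of $\alpha+\gamma$ on $K_{2,h}$ are $1-b,1-c,1-d$ rather than $b,1-c,1-d$, which is harmless since all equal $\tfrac12+O(h)$.
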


\begin{proof}
The ratio variables $u,v$ are rational functions of the tetrahedral barycentric coordinates.  On each of the two split triangles the denominators $\alpha+\gamma$ and $\beta+\delta$ in \eqref{eq:ratio-map} are bounded below by a positive constant.  The tubular offset shell construction gives more: these denominators are constants plus $O(h)$ affine perturbations on each split triangle.  Indeed, all same-sign vertices lie on the same signed-distance layer up to $O(h^2)$, and therefore the zero parameters on the four sign-changing edges differ from fixed constants only by $O(h)$.  Since the barycentric coordinates are affine in $\bxi$, repeated differentiation of the quotients gives
\begin{equation}
\label{eq:uv-higher-derivatives}
        |D_{\bxi}^{\balpha}u|+|D_{\bxi}^{\balpha}v|\le C h^{m-1},
        \qquad |\balpha|=m\ge1.
\end{equation}
We now prove (8.16).  Put
\[
        D_u=\alpha+\gamma,\qquad D_v=\beta+\delta,
        \qquad u=\gamma D_u^{-1},\qquad v=\delta D_v^{-1}.
\]
We first show that, on each split triangle,
\[
        D_u=c_u+\varepsilon_u,\qquad D_v=c_v+\varepsilon_v,
\]
where \(c_u,c_v>0\) are constants independent of \(h\), and
\[
        \|\varepsilon_u\|_{W^{1,\infty}(\widehat K)}
        +\|\varepsilon_v\|_{W^{1,\infty}(\widehat K)}
        \le Ch.
\]
Indeed, let \(A^+\) and \(A^-\) be the two endpoints of a sign-changing
edge, with
\[
        d(A^+)=\rho_+h+O(h^2),\qquad
        d(A^-)=-\rho_-h+O(h^2),
\]
where \(\rho_\pm>0\) are fixed.  If
\(X(t)=(1-t)A^+ + tA^-\), then
\[
        d(X(t))=(1-t)d(A^+)+t d(A^-)+R(t),
        \qquad |R(t)|+|R'(t)|\le Ch^2,
\]
because \(d\in C^2\) and \(|A^+-A^-|=O(h)\).  Hence the zero parameter
\(t_*\) satisfies
\[
        t_*=\frac{\rho_+}{\rho_++\rho_-}+O(h)
\]
when the edge is oriented from \(A^+\) to \(A^-\), and the analogous formula
with \(\rho_-\) holds for the opposite orientation.  Consequently, on the
triangle \(P_{01}P_{12}P_{30}\), the nodal values of \(D_u\) are all
\[
        c_u:=\frac{\rho_-}{\rho_++\rho_-}
\]
up to \(O(h)\), while the nodal values of \(D_v\) are all
\[
        c_v:=\frac{\rho_+}{\rho_++\rho_-}
\]
up to \(O(h)\).  The same statement holds on the second split triangle
\(P_{12}P_{23}P_{30}\).  Since the tetrahedral barycentric coordinates are
affine functions of the reference coordinates \(\xi\), the functions
\(D_u\) and \(D_v\) are affine on each split triangle.  Therefore their
deviations from \(c_u\) and \(c_v\) have \(W^{1,\infty}\)-norm \(O(h)\).
In particular,
\[
        D_u,D_v\ge c>0,\qquad
        |\nabla_\xi D_u|+|\nabla_\xi D_v|\le Ch,
        \qquad
        D_\xi^\mu D_u=D_\xi^\mu D_v=0\quad (|\mu|\ge2).
\]

It follows that, for every multi-index \(\mu\) with \(|\mu|=k\ge1\),
\[
        |D_\xi^\mu(D_u^{-1})|
        +|D_\xi^\mu(D_v^{-1})|
        \le C h^k.
\]
This is a direct consequence of the Faà di Bruno formula: since \(D_u\)
and \(D_v\) are affine, every nonzero term in \(D_\xi^\mu(D_u^{-1})\) or
\(D_\xi^\mu(D_v^{-1})\) contains exactly \(k\) first derivatives of the
denominator, each of which is \(O(h)\), while the denominator itself is
bounded away from zero.

Finally, since \(\gamma\) and \(\delta\) are affine functions of \(\xi\),
\[
        D_\xi^\mu(\gamma D_u^{-1})
        =
        \gamma D_\xi^\mu(D_u^{-1})
        +\sum_{i=1}^2 \mu_i(\partial_{\xi_i}\gamma)
          D_\xi^{\mu-e_i}(D_u^{-1}),
\]
and all higher derivatives of \(\gamma\) vanish.  Therefore, for
\(|\mu|=m\ge1\),
\[
        |D_\xi^\mu u|
        \le Ch^m+Ch^{m-1}
        \le Ch^{m-1}.
\]
The same argument applied to \(v=\delta D_v^{-1}\) gives
\[
        |D_\xi^\mu v|\le Ch^{m-1}.
\]
This proves (8.16).
Using
\[
        D_{\bxi}^{\balpha}q
        =\frac{x_3-x_1}{h}D_{\bxi}^{\balpha}v
         -\frac{x_2-x_0}{h}D_{\bxi}^{\balpha}u,
        \qquad |\balpha|\ge1,
\]
and the fact that all tetrahedral edges have length $O(h)$, we obtain the first estimate in \eqref{eq:q-derivative-bound}.

It remains to estimate the normal components of the rail directions.  Since $x_0$ and $x_2$ have the same sign and the mesh is balanced,
\[
        d(x_2)-d(x_0)=O(h^2).
\]
Taylor expansion of $d$ along $x_0x_2$ gives
\[
        \nabla d(x_0)\cdot (x_2-x_0)=O(h^2).
\]
The normal varies by $O(h)$ inside $T$, so
\[
        \nn(Y)\cdot\frac{x_2-x_0}{h}=O(h).
\]
The same argument applies to $(x_3-x_1)/h$.  Combining this $O(h)$ normal factor with \eqref{eq:uv-higher-derivatives} gives the second estimate in \eqref{eq:q-derivative-bound}.  Finally, $M$ and $N$ lie on opposite sides of $\Gamma$ and have signed distances comparable to $h$; hence $q=(N-M)/h$ has a normal component bounded away from zero.
\end{proof}

\begin{theorem}[Derivative bounds for four-edge cuts]
\label{thm:case2-derivatives}
Let $K_h$ be either triangle in the split of a quadrilateral cut, and let $Y(\bxi)=\Phi_T(X(\bxi))$.  Let $g(\bs)$ be a smooth local coordinate chart of $\Gamma$ such that $Y(\bxi)=g(\bs(\bxi))$.  Under Definition \ref{def:admissible} and the tubular offset shell construction, for every multi-index $\balpha$ with $|\balpha|=n\ge1$,
\begin{equation}
\label{eq:case2-scaled-bound}
        |D_{\bxi}^{\balpha}\bs|\le C h^{\lfloor n/2\rfloor+1}.
\end{equation}
Equivalently, in physical coordinates on $K_h$,
\begin{equation}
\label{eq:case2-physical-bound}
        |D_X^{\balpha}\bs|\le C h^{-\lfloor (n-1)/2\rfloor}.
\end{equation}
\end{theorem}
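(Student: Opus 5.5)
We follow the argument of Theorem \ref{thm:case1-derivatives}, replacing the vertex-projection offset \eqref{eq:tri-offset} by the ruled offset \eqref{eq:case2-offset}. We write the lift as
\[
        g(\bs(\bxi))-X(\bxi)-\ell(\bxi)q(\bxi)=0,
\]
and introduce the matrix $A(\bxi)=(g_{s_1}(\bs),g_{s_2}(\bs),-q(\bxi))$. The first two columns span $T_Y\Gamma$. The third column has a normal component bounded below, $|\nn(Y)\cdot q|\ge c$, by Lemma \ref{lem:normal-gain-q}. Hence $A^{-1}$ is uniformly bounded, and the analogue of \eqref{eq:Ainv-third-tri} holds: the third component of $A^{-1}R$ is bounded by $C|\nn(Y)\cdot R|+Ch|R|$.

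We then prove by induction on $n=|\balpha|$ the coupled claim
\[
        |D^{\balpha}\bs|\le Ch^{\lfloor n/2\rfloor+1},\qquad
        |D^{\balpha}\ell|\le Ch^{\lfloor (n+1)/2\rfloor+1}.
\]
\emph{Base case $n=1$.} Differentiating gives $A\,(D_i s_1,D_i s_2,D_i\ell)^T=D_iX+\ell D_iq$. Here $D_iX=O(h)$. The normal component of $D_iX$ is $O(h^2)$ by \eqref{eq:d-on-linear-triangle}, since the vertices of $K_h$ lie on $\Gamma$, after replacing $\nn(X)$ by $\nn(Y)$ at cost $O(h)\cdot|D_iX|$. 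The term $\ell D_iq$ is $O(h^2)$ because $\ell=O(h^2)$. This yields $D\bs=O(h)$ and $D\ell=O(h^2)$.

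\emph{Inductive step.} Since $X$ is affine, applying $D^{\balpha}$ with $n\ge2$ kills $X$. We obtain $A\,(D^{\balpha}\bs,D^{\balpha}\ell)^T=R_{\balpha}$, where $R_{\balpha}$ contains three kinds of terms:
\begin{enumerate}[(a)]
\item the Faà di Bruno terms $D^m_{\bs}g[D^{\bbeta_1}\bs,\ldots,D^{\bbeta_m}\bs]$ with $m\ge2$;
\item the terms $\ell\,D^{\balpha}q$;
\item the mixed terms $D^{\bgamma}\ell\,D^{\balpha-\bgamma}q$ with $0<\bgamma<\balpha$.
\end{enumerate}
This is the key difference from the triangular case, where $D^{\ge2}V=0$: now $q$ has nonvanishing higher derivatives. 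Lemma \ref{lem:normal-gain-q} controls them as $|D^{\bgamma}q|\le Ch^{|\bgamma|-1}$, with one additional factor $h$ in the normal direction.

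The size bound $|R_{\balpha}|\le Ch^{\lfloor n/2\rfloor+1}$ follows for each kind:
\begin{itemize}
\item For (a), use Lemma \ref{lem:integer} as before.
\item For (b), $h^2\cdot h^{n-1}=h^{n+1}$, which suffices.
\item For (c), with $|\bgamma|=j$, the bound is $h^{\lfloor (j+1)/2\rfloor+1}h^{n-j-1}$. Its exponent is at least $\lfloor n/2\rfloor+1$, since $n-j-1+\lfloor (j+1)/2\rfloor\ge\lfloor n/2\rfloor$ for $j\le n-1$. The case $j=n-1$ is the tightest: it gives exactly $\lfloor n/2\rfloor+1$.
\end{itemize}

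The normal bound $|\nn(Y)\cdot R_{\balpha}|\le Ch^{\lfloor (n+1)/2\rfloor+1}$ also follows for each kind:
\begin{itemize}
\item For (a), repeat the parity argument used in the triangular proof. The dominant contribution comes from $m=2$ with both orders odd, where $\nn\cdot D^2g$ is merely bounded, and the counting gives the extra $h$ when $n$ is odd.
\item For (b) and (c), the extra factor $h$ from Lemma \ref{lem:normal-gain-q} gives exponent at least $\lfloor n/2\rfloor+2\ge\lfloor (n+1)/2\rfloor+1$.
\end{itemize}

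Applying $A^{-1}$ together with the third-component estimate then closes the induction.

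The physical bound \eqref{eq:case2-physical-bound} follows from \eqref{eq:case2-scaled-bound} because $X$ is affine with $D_X\bxi=O(h^{-1})$, exactly as at the end of the proof of Theorem \ref{thm:case1-derivatives}.

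The main obstacle is the normal estimate for the Faà di Bruno terms, and more generally checking that the mixed terms (c) never lose the parity gain. This is exactly where the balanced normal condition enters, through the bound $|\nn\cdot D^{\bgamma}q|\le Ch^{|\bgamma|}$. If the crude exponent count fails for some small $(n,j)$ combination, the fallback is to split $D^{\bgamma}q$ into its tangential and normal parts explicitly and use $\ell=O(h^2)$ directly in the low-order terms.
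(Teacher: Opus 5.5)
Your proposal is correct and follows the paper's proof essentially step for step: the same offset system $g(\bs)-X-\ell q=0$ with $A=(g_{s_1},g_{s_2},-q)$, the same coupled induction on $D^{\balpha}\bs$ and $D^{\balpha}\ell$, the same splitting of $R_{\balpha}$ into Fa\`a di Bruno terms and $D^{\bbeta}\ell\,D^{\balpha-\bbeta}q$ terms (the paper likewise treats $|\bbeta|=0$ separately), and the same use of Lemma \ref{lem:normal-gain-q} for the normal gain. Your exponent count closes for every $(n,j)$, so the fallback you mention is never needed.
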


\begin{proof}
We use the offset equation \eqref{eq:case2-offset}.  Write it as
\begin{equation}
\label{eq:offset-system}
        \Psi(\bs,\ell,\bxi):=g(\bs)-X(\bxi)-\ell q(\bxi)=0.
\end{equation}
Let
\[
        A(\bxi)=\left(g_{s_1}(\bs),g_{s_2}(\bs),-q(\bxi)\right).
\]
Since $q$ has a normal component bounded away from zero, while $g_{s_1},g_{s_2}$ span the tangent plane, $A^{-1}$ is uniformly bounded.  More precisely, if $R\in\R^3$, then the third component of $A^{-1}R$ is bounded by $C|\nn(Y)\cdot R|+Ch|R|$, and the first two components are bounded by $C|R|$.

We prove by induction on $n=|\balpha|$ the stronger pair of estimates
\begin{equation}
\label{eq:induction-claim}
        |D_{\bxi}^{\balpha}\bs|\le C h^{\lfloor n/2\rfloor+1},
        \qquad
        |D_{\bxi}^{\balpha}\ell|\le C h^{\lfloor (n+1)/2\rfloor+1}.
\end{equation}
For $n=1$, differentiating \eqref{eq:offset-system} gives
\begin{equation}
\label{eq:case2-first}
        A\begin{pmatrix}D_i s_1\\D_i s_2\\D_i\ell\end{pmatrix}
        =D_iX+\ell D_iq.
\end{equation}
Here $D_iX=O(h)$ and $\ell D_iq=O(h^2)$.  Also
\[
        \nn(Y)\cdot D_iX=D_i(d(X))+O(|Y-X|\,|D_iX|)=O(h^2),
\]
because $d(X)=O(h^2)$ on the whole linear triangle.  Lemma \ref{lem:normal-gain-q} gives $\nn(Y)\cdot\ell D_iq=O(h^3)$.  Hence $D_i\bs=O(h)$ and $D_i\ell=O(h^2)$, proving the base case.

Let $n\ge2$ and assume \eqref{eq:induction-claim} for all lower orders.  Apply $D_{\bxi}^{\balpha}$ to \eqref{eq:offset-system}.  Since $X$ is affine, $D_{\bxi}^{\balpha}X=0$.  Isolating the highest-order derivatives gives
\begin{equation}
\label{eq:case2-linear-system}
        A\begin{pmatrix}D^{\balpha}s_1\\D^{\balpha}s_2\\D^{\balpha}\ell\end{pmatrix}
        =R_{\balpha},
\end{equation}
where $R_{\balpha}$ is a finite sum of two types of lower-order terms:
\begin{equation}
\label{eq:R-alpha}
\begin{aligned}
        R_{\balpha}=&-
        \sum_{m\ge2}\sum_{\bbeta_1+\cdots+\bbeta_m=\balpha}
        C_{\bbeta_1\cdots\bbeta_m}
        D_{\bs}^{m}g(\bs)
        \big[D^{\bbeta_1}\bs,\ldots,D^{\bbeta_m}\bs\big] \\
        &+\sum_{0\le \bbeta<\balpha}
        C_{\bbeta}\,D^{\bbeta}\ell\,D^{\balpha-\bbeta}q.
\end{aligned}
\end{equation}
In the first sum every $\bbeta_j$ is nonzero and has order strictly less than $n$; in the second sum $D^{\balpha-\bbeta}q$ is a nonzero derivative of $q$.

We first bound the size of $R_{\balpha}$.  For the terms involving $g$, the induction hypothesis and Lemma \ref{lem:integer} imply
\[
        \prod_{j=1}^m |D^{\bbeta_j}\bs|
        \le C h^{\sum_j(\lfloor |\bbeta_j|/2\rfloor+1)}
        \le C h^{\lfloor n/2\rfloor+1}.
\]
For the terms involving $q$, write $j=|\bbeta|$ and $m=|\balpha-\bbeta|=n-j\ge1$.  If $j=0$, then $D^{\bbeta}\ell=\ell=O(h^2)$ and Lemma \ref{lem:normal-gain-q} gives $D^{\balpha}q=O(h^{n-1})$, which is better than required.  If $j\ge1$, the induction hypothesis and Lemma \ref{lem:normal-gain-q} give
\[
        |D^{\bbeta}\ell|\,|D^{\balpha-\bbeta}q|
        \le
        C h^{\lfloor(j+1)/2\rfloor+1}h^{m-1}
        \le C h^{\lfloor n/2\rfloor+1}.
\]
Therefore
\begin{equation}
\label{eq:R-size}
        |R_{\balpha}|\le C h^{\lfloor n/2\rfloor+1}.
\end{equation}

We also need the normal component of $R_{\balpha}$.  The terms involving $g$ satisfy the stronger estimate
\[
        \left|\nn(Y)\cdot D_{\bs}^{m}g(\bs)
        \big[D^{\bbeta_1}\bs,\ldots,D^{\bbeta_m}\bs\big]\right|
        \le C h^{\lfloor (n+1)/2\rfloor+1},
\]
again by the induction hypothesis and the parity inequality; for $m=2$ and $n=2$ this is exactly the curvature term of size $O(h^2)$.  For the $q$-terms, Lemma \ref{lem:normal-gain-q} gives the normal estimate for $D^{\balpha-\bbeta}q$.  The same product estimate therefore yields
\begin{equation}
\label{eq:R-normal}
        |\nn(Y)\cdot R_{\balpha}|
        \le C h^{\lfloor (n+1)/2\rfloor+1}.
\end{equation}
Combining \eqref{eq:case2-linear-system}, \eqref{eq:R-size}, and \eqref{eq:R-normal} with the mapping properties of $A^{-1}$ yields
\[
        |D^{\balpha}\bs|\le C h^{\lfloor n/2\rfloor+1},
        \qquad
        |D^{\balpha}\ell|\le C h^{\lfloor (n+1)/2\rfloor+1}.
\]
This closes the induction.  Finally, since $D_{\bxi}X=O(h)$ and the inverse affine map has derivatives $O(h^{-1})$, scaling from the reference triangle to physical coordinates gives \eqref{eq:case2-physical-bound}.
\end{proof}

\paragraph{Remark on the proof framework.}
The tubular offset shell construction is the point at which the proof uses more than non-reintersection of the surface with tetrahedra.  Without \eqref{eq:balanced-distance}, the same-sign rails in a four-edge cut may have an $O(1)$ normal component after division by $h$, and the normal-gain Lemma \ref{lem:normal-gain-q} need not hold.  The construction is still well-defined, and the $H^1$ stability of the lift remains valid, but the high-order $H^m$ estimate for the ruled lift is not guaranteed by the present argument.  Therefore the mesh should be generated by the balanced tubular shell construction, or by another construction with the same normal-gain and ratio-map estimates, when a rigorous high-order error estimate is desired.

\subsubsection{Pullback estimates in Sobolev norms}
Combining Theorems \ref{thm:case1-derivatives} and \ref{thm:case2-derivatives} gives a uniform estimate for all surface elements.

For an integer \(m\ge0\), we use the broken Sobolev norm
\begin{equation}
\label{eq:broken-Sobolev-norm}
        \norm{w}_{H^m(\Kh)}^2
        :=
        \sum_{K\in\Kh}\norm{w}_{H^m(K)}^2 .
\end{equation}
Here \(H^m(\Kh)\) denotes the corresponding broken space.  This
distinction is essential for \(m\ge2\): the global lift is continuous,
but derivatives of its elementwise restrictions need not agree across
surface-element edges.  Thus a smooth function on \(\Gamma\) generally
pulls back to a piecewise \(H^m\) function, not to an element of the
global space \(H^m(\Gamma_h)\).

\begin{theorem}[Broken Sobolev norm of the pullback]
\label{thm:pullback-Hm}
Let $m\ge1$ be an integer and assume $F\in C^{m+2}$ and the adjusted mesh is admissible.  Let $v\in H^m(\Gamma)$, and define its pullback to $\Gamma_h$ by
\[
        v^{-l}(X)=v(\Phi_h(X)),\qquad X\in\Gamma_h.
\]
Then
\begin{equation}
\label{eq:Hm-pullback}
        \norm{v^{-l}}_{H^m(\Kh)}
        \le C h^{-\lfloor (m-1)/2\rfloor}\norm{v}_{H^m(\Gamma)}.
\end{equation}
For $m=0$ and $m=1$, the pullback is globally conforming and this
estimate is consistent with the uniform norm equivalence of Lemma
\ref{lem:lift-stability}.
\end{theorem}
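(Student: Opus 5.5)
The estimate is local, so I will prove it element by element and then sum. Fix $K\in\Kh$ with exact parameterization $r_K=g\circ\bs:\wh K\to\Gamma\cap T$, where $g$ is a smooth chart of $\Gamma$ covering the patch with uniformly bounded derivatives and uniformly bounded inverse metric. On $K$ the pullback is $v^{-l}\circ X_K=\tilde v\circ\bs$, where $\tilde v=v\circ g$ is defined on the chart parameter domain. Since $\Gamma$ is compact and $F\in C^{m+2}$, finitely many such charts suffice. On each chart $\|\tilde v\|_{H^k(\bs(\wh K))}$ is comparable to $\|v\|_{H^k(\Gamma\cap T)}$ for $k\le m$, and the comparison constants depend only on $C^{m}$ bounds of $g$.

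\textbf{Step 1: chain rule in physical coordinates.} For $|\balpha|=k\le m$, I will apply the Faà di Bruno formula to $D_X^{\balpha}(\tilde v\circ\bs)$. This writes it as a sum of terms $(D_{\bs}^{q}\tilde v)(\bs)[D_X^{\bbeta_1}\bs,\ldots,D_X^{\bbeta_q}\bs]$ with $1\le q\le k$, nonzero $\bbeta_j$, and $\sum\bbeta_j=\balpha$. By Theorems \ref{thm:case1-derivatives} and \ref{thm:case2-derivatives}, each factor satisfies $|D_X^{\bbeta_j}\bs|\le Ch^{-\lfloor(|\bbeta_j|-1)/2\rfloor}$, uniformly over both cut types. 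Lemma \ref{lem:integer} controls the exponent sum. For $q\ge2$ it gives $\sum_j\lfloor(|\bbeta_j|-1)/2\rfloor\le\lfloor k/2\rfloor-1\le\lfloor(k-1)/2\rfloor$. For $q=1$ the exponent is exactly $\lfloor(k-1)/2\rfloor$. Hence
\[
|D_X^{\balpha}v^{-l}|\le C h^{-\lfloor(k-1)/2\rfloor}\sum_{q=1}^{k}|(D^q_{\bs}\tilde v)(\bs)|
\le C h^{-\lfloor(m-1)/2\rfloor}\sum_{q=1}^{k}|(D^q_{\bs}\tilde v)(\bs)|.
\]
The $k=0$ term is trivial.

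\textbf{Step 2: change of variables.} I will square, integrate over $K$, and change variables $X\mapsto\bs(X)$. The Jacobian of $X\mapsto\bs$ is uniformly invertible with bounded inverse. This follows from Lemma \ref{lem:metric-bounds}, because $J_{r_K}$ and $J_{X_K}$ both have singular values comparable to $h$, and $g$ is uniformly regular. Therefore $\int_K|(D^q\tilde v)(\bs(X))|^2\,\d X\le C\|\tilde v\|_{H^q(\bs(K))}^2\le C\|v\|^2_{H^q(\Gamma\cap T)}$. Summing over $K\in\Kh$ gives \eqref{eq:Hm-pullback}. This uses that each point of $\Gamma$ is covered by a bounded number of patches; in fact $\Phi_h$ is a bijection by Theorem \ref{thm:global-lift-homeomorphism}, so the patches $\Phi_h(K)$ overlap only on null sets. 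For $m=0,1$ the exponent vanishes and the bound reduces to Lemma \ref{lem:lift-stability}.

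\textbf{Main obstacle.} The delicate point is that $v\in H^m$ is not smooth, so the pointwise chain rule in Step 1 needs justification. I will first prove the bound for $v\in C^\infty(\Gamma)$. Then I will argue by density: the map $\bs$ is a $C^{m}$ diffeomorphism on each element, since $F\in C^{m+2}$ and the implicit-function constructions inherit $C^{m}$ regularity. Composition with it is therefore continuous $H^m\to H^m$ on each element, and the estimate passes to the limit. A secondary check is that the constants in Theorems \ref{thm:case1-derivatives} and \ref{thm:case2-derivatives} depend only on $C^{m+2}$ norms of $F$ and on shape regularity, so they are uniform over elements.
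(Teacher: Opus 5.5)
Your proposal is correct and follows the paper's proof. Both arguments apply Faà di Bruno in a local chart, take the per-factor bounds $|D_X^{\bbeta_j}\bs|\le Ch^{-\lfloor(|\bbeta_j|-1)/2\rfloor}$ from Theorems~\ref{thm:case1-derivatives} and~\ref{thm:case2-derivatives}, use Lemma~\ref{lem:integer} for the terms with $q\ge2$, and finish with an elementwise change of variables via the metric bounds before summing over $\Kh$. Your additions fill in details the paper leaves implicit: the check $\lfloor k/2\rfloor-1\le\lfloor(k-1)/2\rfloor$, the observation that $D_X\bs$ and its inverse are uniformly $O(1)$, and the density argument justifying the pointwise chain rule.
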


\begin{proof}
The proof is local.  In a smooth chart $g(\bs)$ of $\Gamma$, derivatives of $v\circ g(\bs(X))$ with respect to physical coordinates on $K\subset\Gamma_h$ are sums, by the multivariate chain rule, of terms of the form
\[
        D_{\bs}^q(v\circ g)
        \prod_{j=1}^q D_X^{\bbeta_j}\bs,
        \qquad
        \bbeta_1+\cdots+\bbeta_q=\balpha,
        \quad |\balpha|\le m.
\]
Theorems \ref{thm:case1-derivatives} and \ref{thm:case2-derivatives} give
\[
        |D_X^{\bbeta_j}\bs|
        \le C h^{-\lfloor(|\bbeta_j|-1)/2\rfloor}.
\]
If $q=1$, the factor is at most $h^{-\lfloor(|\balpha|-1)/2\rfloor}$.  If $q\ge2$, Lemma \ref{lem:integer} gives the same or a better power.  Applying these estimates elementwise, summing over $K\in\Kh$, and using the uniform metric equivalence proves \eqref{eq:Hm-pullback}.  No matching of derivatives across element edges is used.
\end{proof}

\subsection{Error estimate}
\label{sec:error-estimate}

Let $I_h$ denote the standard nodal interpolation operator on the surface
triangulation $\Kh$.  For $u\in H^{r+1}(\Gamma)$, define its pullback
$\wt u=u^{-l}$ on $\Gamma_h$, and set
\[
        v_h^l=(I_h\wt u)^l\in S_{h,r}^l .
\]
By Lemma \ref{lem:lift-stability}, the standard interpolation estimate on the
shape-regular surface triangulation $\Kh$, and Theorem \ref{thm:pullback-Hm},
\begin{equation}
\label{eq:interp-H1}
\begin{aligned}
        \norm{u-v_h^l}_{H^1(\Gamma)}
        &\le C\norm{\wt u-I_h\wt u}_{H^1(\Gamma_h)}\\
        &\le C h^r \norm{\wt u}_{H^{r+1}(\Kh)}\\
        &\le C h^{r-\lfloor r/2\rfloor}\norm{u}_{H^{r+1}(\Gamma)}\\
        &= C h^{\lceil r/2\rceil}\norm{u}_{H^{r+1}(\Gamma)} .
\end{aligned}
\end{equation}
Together with Cea's lemma \eqref{eq:cea}, this proves the following energy-norm
estimate.

\begin{theorem}[Energy error]
\label{thm:H1-error}
Let $u$ solve \eqref{eq:model-pde}, and let $u_h^l\in S_{h,r}^l$ solve
\eqref{eq:discrete}.  Assume $u\in H^{r+1}(\Gamma)$, $F\in C^{r+3}$, and
the adjusted mesh is admissible.  Then
\begin{equation}
\label{eq:H1-error}
        \norm{u-u_h^l}_{H^1(\Gamma)}
        \le C h^{\lceil r/2\rceil}\norm{u}_{H^{r+1}(\Gamma)} .
\end{equation}
Consequently, since $\norm{w}_{L^2(\Gamma)}\le \norm{w}_{H^1(\Gamma)}$,
one also has the direct Cea-type $L^2$ estimate
\begin{equation}
\label{eq:L2-error-cea}
        \norm{u-u_h^l}_{L^2(\Gamma)}
        \le C h^{\lceil r/2\rceil}\norm{u}_{H^{r+1}(\Gamma)} .
\end{equation}
\end{theorem}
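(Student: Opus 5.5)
The plan is to assemble the estimate from three ingredients already in the paper: the Galerkin quasi-optimality bound \eqref{eq:cea}, the norm equivalence of Lemma \ref{lem:lift-stability}, and the broken pullback estimate of Theorem \ref{thm:pullback-Hm} with $m=r+1$. First I would confirm that \eqref{eq:discrete} is well posed and that \eqref{eq:cea} applies. The form $a(\cdot,\cdot)$ is exactly the $H^1(\Gamma)$ inner product, so it is bounded and coercive with constants equal to one. Theorem \ref{thm:global-lift-homeomorphism} gives $S_{h,r}^l\subset H^1(\Gamma)$, so Lax--Milgram and Galerkin orthogonality hold. This yields \eqref{eq:cea} with $C=1$, and it remains only to bound $\inf_{v_h^l}\norm{u-v_h^l}_{H^1(\Gamma)}$.

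Second, I would choose the comparison function $v_h^l=(I_h\wt u)^l$ with $\wt u=u^{-l}$. The pullback $\wt u$ is continuous on $\Gamma_h$ because $\Phi_h$ is a homeomorphism, and it lies in $H^{r+1}(K)$ on each element by Theorem \ref{thm:pullback-Hm}. Since $r+1\ge2$ and $K$ is two-dimensional, Sobolev embedding makes nodal values meaningful, so $I_h\wt u\in\Sh^r$ is well defined. Then $u-v_h^l=(\wt u-I_h\wt u)^l$. Applying the $H^1$ part of Lemma \ref{lem:lift-stability}, which rests only on Lemma \ref{lem:metric-bounds} applied elementwise, reduces the problem to $\norm{\wt u-I_h\wt u}_{H^1(\Gamma_h)}$. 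Note that Lemma \ref{lem:lift-stability} is stated for $\phi_h\in\Sh^r$, but its proof is an elementwise change of variables, so it extends to any piecewise-$H^1$ continuous function.

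Third, I would use the standard elementwise Bramble--Hilbert interpolation estimate on the shape-regular triangulation $\Kh$:
\[
\norm{\wt u-I_h\wt u}_{H^1(K)}\le Ch^r|\wt u|_{H^{r+1}(K)}.
\]
Summing over $K$ gives a bound by the broken norm $\norm{\wt u}_{H^{r+1}(\Kh)}$. This step is legitimate precisely because interpolation is local, so no interelement smoothness is needed. Theorem \ref{thm:pullback-Hm} with $m=r+1$ then gives
\[
\norm{\wt u}_{H^{r+1}(\Kh)}\le Ch^{-\lfloor r/2\rfloor}\norm{u}_{H^{r+1}(\Gamma)}.
\]
That theorem requires $F\in C^{m+2}=C^{r+3}$, which matches the hypothesis. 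Since $r-\lfloor r/2\rfloor=\lceil r/2\rceil$, the chain \eqref{eq:interp-H1} follows, and so does \eqref{eq:H1-error}. The $L^2$ bound \eqref{eq:L2-error-cea} is immediate from $\norm{\cdot}_{L^2}\le\norm{\cdot}_{H^1}$.

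The main obstacle is justifying that the constants in Theorem \ref{thm:pullback-Hm} are uniform across both cut types and both split triangles. This uniformity depends on Theorem \ref{thm:case2-derivatives}, and therefore on the balanced normal condition from the tubular shell construction. A secondary point is that the pullback estimate uses the full norm $\norm{u}_{H^{r+1}}$ instead of a seminorm: lower-order chain-rule terms appear, and Lemma \ref{lem:integer} is what keeps their powers of $h$ no worse than $h^{-\lfloor r/2\rfloor}$. I would check this bookkeeping carefully, since it is what fixes the rate at $\lceil r/2\rceil$ rather than $r$.
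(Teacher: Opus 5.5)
Your proposal is correct and follows the paper's argument: Céa's lemma (with constant one, since $a$ is exactly the $H^1(\Gamma)$ inner product), the comparison function $(I_h\wt u)^l$, the lift stability of Lemma~\ref{lem:lift-stability}, the elementwise interpolation estimate in the broken norm, and Theorem~\ref{thm:pullback-Hm} with $m=r+1$ (hence the assumption $F\in C^{r+3}$), giving $h^{r-\lfloor r/2\rfloor}=h^{\lceil r/2\rceil}$ exactly as in \eqref{eq:interp-H1}. Your remark that Lemma~\ref{lem:lift-stability} has to be applied to the non-discrete function $\wt u-I_h\wt u$ makes explicit a step the paper uses without comment; it is justified by the elementwise change-of-variables proof of that lemma.
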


The estimate \eqref{eq:L2-error-cea} is the strongest $L^2$ estimate that follows
directly from Cea's lemma alone.  The interpolation function $v_h^l$ itself satisfies
the sharper approximation estimate
\begin{equation}
\label{eq:L2-best-approx}
\begin{aligned}
        \norm{u-v_h^l}_{L^2(\Gamma)}
        &\le C\norm{\wt u-I_h\wt u}_{L^2(\Gamma_h)}\\
        &\le C h^{r+1}\norm{\wt u}_{H^{r+1}(\Kh)}\\
        &\le C h^{r+1-\lfloor r/2\rfloor}
             \norm{u}_{H^{r+1}(\Gamma)}\\
        &= C h^{\lceil r/2\rceil+1}
             \norm{u}_{H^{r+1}(\Gamma)} .
\end{aligned}
\end{equation}

To obtain the additional power of $h$ for the Galerkin error, one uses the usual
Aubin--Nitsche duality argument.  Let $e=u-u_h^l$, and let $z\in H^1(\Gamma)$ solve
\[
        -\Delta_\Gamma z+z=e
        \qquad\text{on }\Gamma .
\]
Assume the elliptic regularity estimate
\begin{equation}
\label{eq:dual-regularity}
        \norm{z}_{H^2(\Gamma)}
        \le C\norm{e}_{L^2(\Gamma)} .
\end{equation}
By Galerkin orthogonality, for any $z_h^l\in S_{h,r}^l$,
\[
        \norm{e}_{L^2(\Gamma)}^2
        =
        a(e,z)
        =
        a(e,z-z_h^l).
\]
Choosing $z_h^l=(I_h z^{-l})^l$ and using the $H^1$ interpolation estimate with
only $H^2$ regularity gives
\[
        \norm{z-z_h^l}_{H^1(\Gamma)}
        \le C h \norm{z}_{H^2(\Gamma)} .
\]
Therefore,
\[
\begin{aligned}
        \norm{e}_{L^2(\Gamma)}^2
        &\le C\norm{e}_{H^1(\Gamma)}
              \norm{z-z_h^l}_{H^1(\Gamma)}\\
        &\le C h\norm{e}_{H^1(\Gamma)}
              \norm{z}_{H^2(\Gamma)}\\
        &\le C h\norm{e}_{H^1(\Gamma)}
              \norm{e}_{L^2(\Gamma)} .
\end{aligned}
\]
Combining this with \eqref{eq:H1-error} yields the sharper $L^2$ estimate
\begin{equation}
\label{eq:L2-error}
        \norm{u-u_h^l}_{L^2(\Gamma)}
        \le C h^{\lceil r/2\rceil+1}\norm{u}_{H^{r+1}(\Gamma)} .
\end{equation}

The estimates above display the half-order-type derivative loss caused by the
general lifting map.  If a stronger lift with uniformly bounded higher derivatives is
used, the same proof recovers the usual optimal surface finite element rates.

\section{Numerical experiments}
\label{sec:numerical-experiments}

We test the lifted surface finite element method on an ellipsoid and a torus.
Both experiments use manufactured smooth solutions and the same tubular-shell
construction, local lifts, polynomial degrees, and quadrature orders.  The
purpose is to verify the estimates of Section~4.3 on geometries with different
curvature and topology.  For a finite element degree \(p\), the theoretical
reference orders are
\[
    \|u-u_h^l\|_{H^1(\Gamma)}
    =O\!\left(h^{\lceil p/2\rceil}\right),
    \qquad
    \|u-u_h^l\|_{L^2(\Gamma)}
    =O\!\left(h^{\lceil p/2\rceil+1}\right).
\]
Thus the \(H^1/L^2\) reference orders are \(2/3\) for \(P_3\) and \(3/4\)
for \(P_5\).

\subsection{Ellipsoid test problem}
\label{subsec:ellipsoid-test}

As a first test, we consider the ellipsoid
\begin{equation}
    \Gamma
    =
    \left\{(x,y,z)\in\mathbb{R}^{3}:F(x,y,z)=0\right\},
    \qquad
    F(x,y,z)
    =
    \frac{x^{2}}{a^{2}}
    +\frac{y^{2}}{b^{2}}
    +\frac{z^{2}}{c^{2}}-1,
    \label{eq:ellipsoid-levelset}
\end{equation}
with semi-axes \((a,b,c)=(2,1.5,1)\).  We solve
\[
    -\Delta_{\Gamma}u+u=f
    \qquad\hbox{on }\Gamma
\]
by the method of manufactured solutions and prescribe
\begin{equation}
    u(x,y,z)=\exp(k_1x+k_2y+k_3z),
    \qquad
    k=
    \left(\frac{0.55}{a},-\frac{0.35}{b},\frac{0.75}{c}\right)^{T}.
    \label{eq:ellipsoid-manufactured-solution}
\end{equation}
Let \(g=\nabla F\), \(H_F=D^2F\), and \(n_\Gamma=g/|g|\).  The divergence of
the unit normal is available explicitly:
\[
    \operatorname{div}n_\Gamma
    =
    \frac{\operatorname{tr}H_F}{|g|}
    -\frac{g^T H_F g}{|g|^3}.
\]
Using
\[
    \Delta_\Gamma U
    =
    (I-n_\Gamma n_\Gamma^T):D^2U
    -(n_\Gamma\cdot\nabla U)\operatorname{div}n_\Gamma,
\]
the right-hand side is
\begin{equation}
    f
    =
    \left[
       1-|k|^2+(k\cdot n_\Gamma)^2
       +(k\cdot n_\Gamma)\operatorname{div}n_\Gamma
    \right]u.
    \label{eq:ellipsoid-rhs}
\end{equation}

A shape-regular and quasi-uniform auxiliary triangulation is obtained by
subdividing the eight faces of a regular octahedron with frequencies
\(N=4,6,8,10\), projecting the new vertices radially to the unit sphere, and
applying the smooth scaling map
\[
    (\xi_1,\xi_2,\xi_3)
    \longmapsto
    (a\xi_1,b\xi_2,c\xi_3).
\]
For every surface vertex \(p_i\), the two offset vertices are
\[
    p_i^\pm=p_i\pm0.35h_S n_\Gamma(p_i).
\]
Each resulting triangular prism is split into three tetrahedra, after which
the three-edge and four-edge local lifts are used to assemble the method on
the exact ellipsoid.

Continuous Lagrange elements of degrees \(p=3\) and \(p=5\) are tested.  A
Duffy-transformed triangular Gauss rule of order \(16\) is used for assembly,
and a rule of order \(22\) is used for the error norms.  A least-squares fit
over all four mesh levels gives
\[
    \begin{array}{c|cc}
        p & H^1\hbox{-error} & L^2\hbox{-error} \\
        \hline
        3 & 2.46 & 3.71 \\
        5 & 3.84 & 5.05
    \end{array}.
\]
All four levels contain no tiny surface triangles, and the same-sign offset
imbalance remains at roundoff level.  The measured rates meet or exceed the
theoretical reference orders.

\begin{figure}[t]
    \centering
    \includegraphics[width=\linewidth]{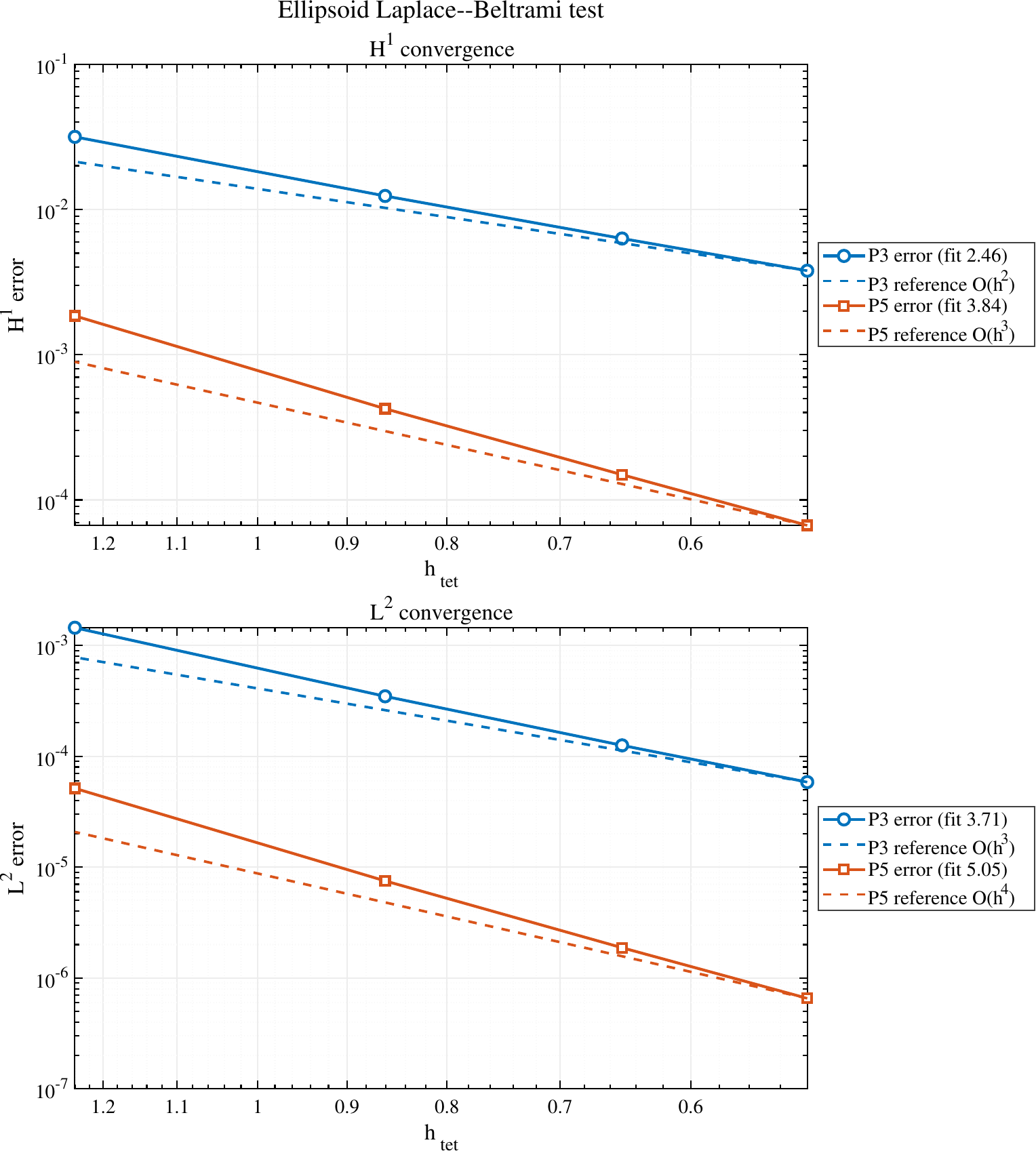}
    \caption{Log-log convergence histories for the ellipsoid problem.  The
    \(H^1\)-error is shown in the upper panel and the \(L^2\)-error in the
    lower panel.  Dashed lines indicate the theoretical reference orders;
    the fitted orders in the legends are least-squares slopes over all four
    mesh levels.}
    \label{fig:ellipsoid-p3p5-stacked-v2}
\end{figure}

\subsection{Torus test problem}
\label{subsec:torus-test}

As a second test, we consider the torus
\begin{equation}
    \Gamma
    =
    \left\{(x,y,z)\in\mathbb{R}^{3}:F(x,y,z)=0\right\},
    \qquad
    F
    =
    (x^{2}+y^{2}+z^{2}+R^{2}-a^{2})^{2}
    -4R^{2}(x^{2}+y^{2}),
    \label{eq:torus-levelset}
\end{equation}
with major radius \(R=2\) and minor radius \(a=0.7\).  Equivalently,
\[
    X(\theta,\varphi)
    =
    \bigl((R+a\cos\varphi)\cos\theta,
          (R+a\cos\varphi)\sin\theta,
          a\sin\varphi\bigr),
    \qquad
    (\theta,\varphi)\in[0,2\pi)^2.
\]
We solve
\[
    -\Delta_{\Gamma}u+u=f
    \qquad\hbox{on }\Gamma
\]
by the method of manufactured solutions and prescribe
\begin{equation}
    u(x,y,z)=\exp(k_1x+k_2y+k_3z),
    \qquad
    k=(0.15,-0.125,0.5)^T.
    \label{eq:torus-manufactured-solution}
\end{equation}
The outward unit normal and its divergence are
\[
    n_\Gamma
    =
    (\cos\varphi\cos\theta,
     \cos\varphi\sin\theta,
     \sin\varphi)^T,
    \qquad
    \operatorname{div}n_\Gamma
    =
    \frac{1}{a}
    +\frac{\cos\varphi}{R+a\cos\varphi}.
\]
The same surface-Laplacian identity used above gives
\begin{equation}
    f
    =
    \left[
       1-|k|^2+(k\cdot n_\Gamma)^2
       +(k\cdot n_\Gamma)\operatorname{div}n_\Gamma
    \right]u.
    \label{eq:torus-rhs}
\end{equation}

The torus has a regular doubly periodic parametrization with metric
\[
    g_{\theta\theta}=(R+a\cos\varphi)^2,
    \qquad
    g_{\varphi\varphi}=a^2,
    \qquad
    g_{\theta\varphi}=0.
\]
Since \(R-a\le R+a\cos\varphi\le R+a\), a shape-regular and quasi-uniform
surface triangulation is obtained from uniform periodic partitions with
\(N_\varphi=6,8,10,12\) and
\[
    N_\theta
    =
    \max\!\{12,\lceil
    \frac{R}{a}N_\varphi\rceil\},
\]
followed by a fixed diagonal subdivision of every parameter rectangle.  For
each surface vertex \(p_i\), we set
\[
    p_i^\pm=p_i\pm0.35h_S n_\Gamma(p_i),
\]
split each triangular prism into three tetrahedra, and apply the same
three-edge and four-edge local lifts as in the ellipsoid test.

Continuous Lagrange elements of degrees \(p=3\) and \(p=5\) are tested.  The
assembly and error quadrature orders are again \(16\) and \(22\), respectively.
A least-squares fit over all four mesh levels gives
\[
    \begin{array}{c|cc}
        p & H^1\hbox{-error} & L^2\hbox{-error} \\
        \hline
        3 & 2.63 & 3.93 \\
        5 & 3.96 & 5.24
    \end{array}.
\]
No tiny surface triangles are detected on any level, and the same-sign offset
imbalance is at machine precision.  The measured rates again meet or exceed
the theoretical reference orders, now on a non-convex surface of genus one.

\begin{figure}[t]
    \centering
    \includegraphics[width=\linewidth]{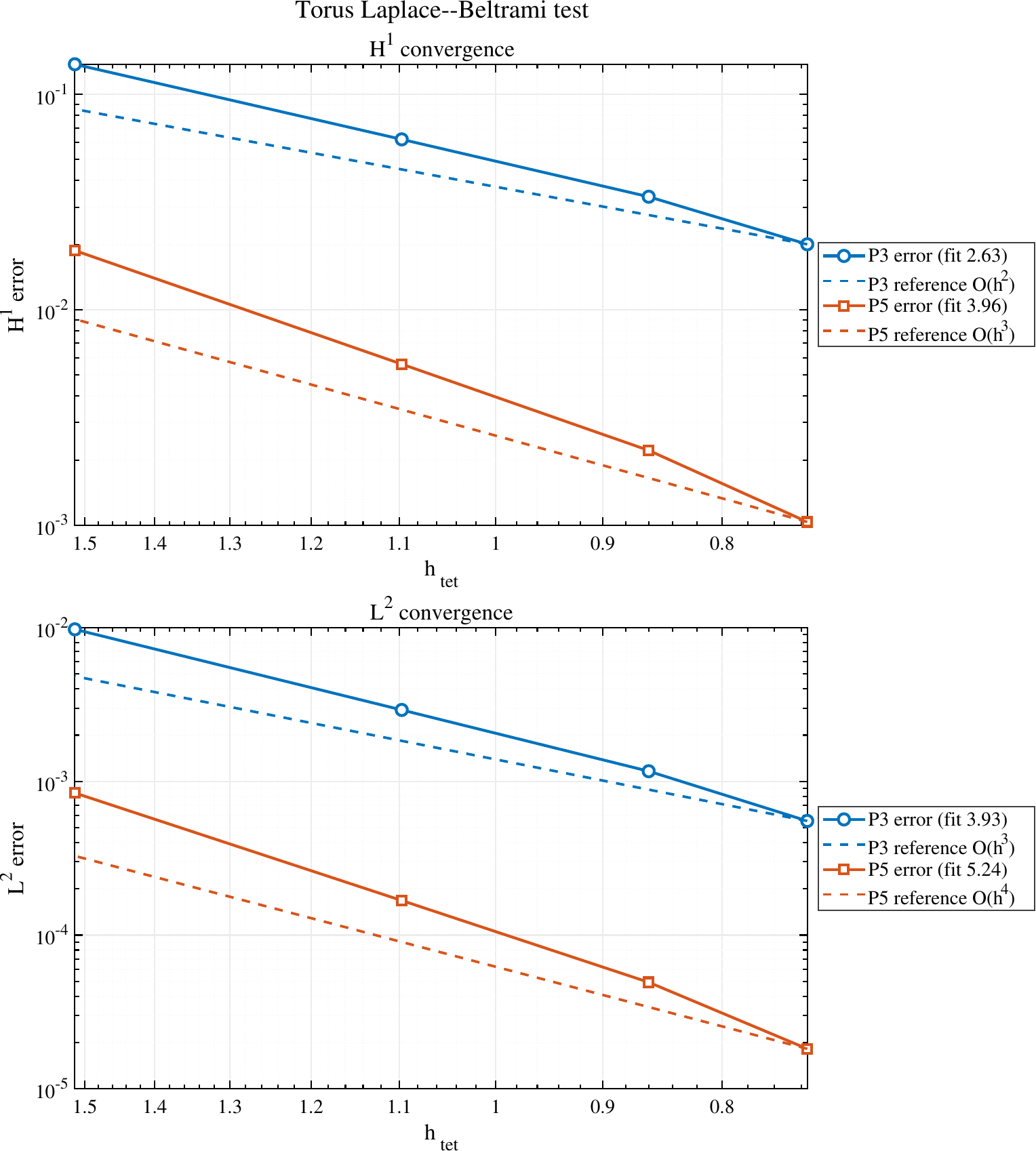}
    \caption{Log-log convergence histories for the torus problem.  The
    \(H^1\)-error is shown in the upper panel and the \(L^2\)-error in the
    lower panel.  Dashed lines indicate the theoretical reference orders;
    the fitted orders in the legends are least-squares slopes over all four
    mesh levels.}
    \label{fig:torus-p3p5-stacked-v2}
\end{figure}

\section{Conclusion}
\label{sec:conclusion}
We presented a high-order surface finite element framework for elliptic equations on implicitly defined surfaces.  The method combines a tubular offset tetrahedral shell mesh, exact local surface parameterizations, and a conforming lifted finite element space.  The shell construction enforces the balanced normal condition while preserving shape regularity through comparable tangential and normal length scales.  The triangular cut is handled by the vertex-projection map.  The four-edge cut is handled by a ruled two-rail map, which provides a single lift for the two triangles representing the generally non-coplanar spatial quadrilateral.  We proved that the local lifts agree pointwise on common mesh faces and assemble into a global homeomorphism.  The main analysis proves high-order derivative estimates for triangular vertex-projection lifts and for four-edge ruled lifts under the stated admissibility assumptions.  These estimates yield explicit broken-Sobolev pullback bounds and a Cea-type energy error estimate for the lifted method.

\section*{Acknowledgments}
This work was supported by National Natural Science \\
Foundation of China (NSFC) 92370125.

\bibliographystyle{siamplain}
\bibliography{references}

\end{document}